\documentclass[12pt]{amsart}

\usepackage{amsmath,amssymb,amsfonts,amsthm}

\usepackage[alphabetic]{amsrefs}
\usepackage{bm}
\usepackage{graphicx}
\usepackage{ascmac}
\usepackage[all]{xy}
\usepackage{mathtools}
\usepackage{tikz-cd}
\usepackage{extarrows}

\usepackage[colorlinks, linkcolor=blue,anchorcolor=blue,citecolor=green]{hyperref}
\usepackage{graphics,epic}

\usepackage{times}
\usepackage{tikz}
\usetikzlibrary{positioning,arrows.meta}
\usetikzlibrary{matrix,calc}
\usetikzlibrary{decorations.pathmorphing}

\newtheorem{theorem}{Theorem}[section]
\newtheorem*{theorem*}{Theorem}

\newtheorem{lemma}[theorem]{Lemma}
\newtheorem{proposition}[theorem]{Proposition}
\newtheorem{corollary}[theorem]{Corollary}

\newtheorem*{conjecture*}{Conjecture}

\newtheorem{remark}[theorem]{Remark}
\newtheorem{definition}[theorem]{Definition}

\newcommand{\leftleadsto}{\mathrel{\reflectbox{$\leadsto$}}}

\newcommand{\opname}[1]{\operatorname{\mathsf{#1}}}

\newcommand{\der}{\cd}

\newcommand{\coker}{\opname{coker}}

\newcommand{\im}{\opname{im}\nolimits}
\renewcommand{\ker}{\opname{ker}\nolimits}

\newcommand{\id}{\mathbf{1}}

\newcommand{\Hom}{\opname{Hom}}

\newcommand{\Ext}{\opname{Ext}}

\newcommand{\cd}{{\mathcal D}}

\begin{document}
	
\title{Homological rigidity of quiver representations over $\mathbb{F}_1$}
\author{Changjian Fu}
\author{Liang Yang}
\author{Zhiyuan Zeng}
\address{Department of Mathematics, Sichuan University, Chengdu, 610064 PR China}

\email{changjianfu@scu.edu.cn(Fu)}
\email{malyang@scu.edu.cn(Yang)}	
\email{2697413645@qq.com(Zeng)}

\begin{abstract}
We establish a homological rigidity phenomenon for the category of representations of quivers over the virtual field $\mathbb{F}_1$, which is inherently non-additive and does not admit classical homological algebra tools. We prove that all higher Yoneda extension groups vanish beyond degree two for arbitrary quivers, including infinite ones. Consequently, the global dimension of the category is universally bounded by 2. Moreover, we obtain a complete classification of quivers according to their homological dimension, which is determined solely by the underlying orientation structure.

\end{abstract}
\maketitle

\section{Introduction}
Let $\mathbb{F}_1$ be the virtual field, i.e., the ``field with one element''. The philosophy of $\mathbb{F}_1$ dates back to Tits \cite{Tits56}, who observed that incidence geometries over a finite field $\mathbb{F}_q$ have  combinatorial counterparts that can be interpreted as incidence
geometries  over $\mathbb{F}_1$. Later, it also appears in Manin's work \cite{Ma95} on translating the geometric proof of
the Weil conjectures from function fields to the case of $\mathbb{Q}$ with the hope to shed some light on the
Riemann Hypothesis. 
To this day, the philosophy of $\mathbb{F}_1$  continues to attract immense interest and has undergone significant development across various branches, including algebraic geometry, number theory, and combinatorics, see  \cites{CC10,CC11,De05,De08,TV09} for instance.

In \cite{Sz12}, Szczesny introduced quiver representations over $\mathbb{F}_1$ as a degenerate, combinatorial analogue of classical quiver representations over fields. Over the past decade, by extending techniques from classical representation theory, researchers have extensively explored the interplay between the combinatorics of $\mathbb{F}_1$-representations and their field-theoretic counterparts (see, e.g., \cites{JKS23,JS23,JS24,Kleinau25}). By contrast, the homological properties of quiver representations over $\mathbb{F}_1$ remain largely unexplored.

Classically, the homological algebra of representation categories is heavily governed by additive and abelian structures. In settings such as quiver representations over a field, projective resolutions and derived categories provide powerful, standard machinery to study homological invariants. In the non-additive setting of $\mathbb{F}_1$-representations, however, these foundational tools are no longer directly available. The resulting categories exhibit fundamentally different homological behaviors, and very little is known about the structures of their higher extension groups.

To package this exploration rigorously, let $Q$ be a finite quiver and $\opname{rep}(Q,\mathbb{F}_1)$ the category of finite-dimensional $\mathbb{F}_1$-representations of $Q$. Although $\opname{rep}(Q,\mathbb{F}_1)$ is not additive, it forms a proto-exact category in the sense of \cite{DK19}. In particular, one can adapt Yoneda's original construction to define the extension spaces $\Ext^n(\mathbf{L},\mathbf{N})$ for any $\mathbf{L},\mathbf{N}\in \opname{rep}(Q,\mathbb{F}_1)$ and any positive integer $n$. Note that $\Ext^n(\mathbf{L},\mathbf{N})$ is not an abelian group but rather a pointed set, or equivalently, an $\mathbb{F}_1$-vector space.

\begin{definition}
Let $Q$ be a quiver. The {\em global dimension} of $\opname{rep}(Q,\mathbb{F}_1)$ is defined as
\[
\opname{gldim} \opname{rep}(Q,\mathbb{F}_1)=\sup\{n \mid \Ext^{n}(-,-)\neq 0\}.
\]
\end{definition}
Similarly, the subcategory $\opname{rep}(Q,\mathbb{F}_1)_{\rm nil}$ consisting of nilpotent $\mathbb{F}_1$-representations is extension-closed. One can thus define the global dimension of $\opname{rep}(Q,\mathbb{F}_1)_{\rm nil}$ in an analogous manner.

Our interest in this global dimension originates from an expectation of Szczesny \cite{Sz12}. Inspired by the fundamental role played by the Euler form in Ringel's theory of Hall algebras \cite{R90}, Szczesny anticipated that the Euler form would play a parallel role in the Hall algebra of $\operatorname{rep}(Q,\mathbb{F}_1)$, provided it is well-defined. However, as noted above, the breakdown of classical homological methods poses a formidable challenge to explicitly computing these $\operatorname{Ext}$-spaces for general quivers. Indeed, it has been  recently observed that the homological properties of $\opname{rep}(Q,\mathbb{F}_1)$ differ drastically from those over fields \cite{FRY24}. Specifically, for a linear quiver of type $\mathbb{A}_n$ with $n\geq 3$, the global dimension is $2$, contrasting sharply with the classical global dimension of $1$. This striking phenomenon naturally led to the conjecture that the global dimension of a general quiver might grow arbitrarily large (e.g., taking values such as $3, 4$, or $5$).

The main objective of this paper is to show that this is surprisingly not the case: the global dimension of an arbitrary quiver (even if infinite) is always bounded by $2$.

\begin{theorem}\label{thm:main-thm-1}
Let $Q$ be an arbitrary quiver. Then we have \[\opname{gldim}\opname{rep}(Q,\mathbb{F}_1)\leq 2\quad \text{and} \quad \opname{gldim}\opname{rep}(Q,\mathbb{F}_1)_{\rm nil}\leq 2.\]
\end{theorem}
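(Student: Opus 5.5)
The plan is to show directly that every Yoneda $n$-extension with $n\geq 3$ is equivalent to the trivial one. The first step is to recall the structure of $\opname{rep}(Q,\mathbb{F}_1)$ as a proto-exact category. A representation $\mathbf{M}$ assigns a finite pointed set $M_i$ to each vertex and, to each arrow, a pointed map that is injective away from the basepoint. An admissible short exact sequence $\mathbf{L}\rightarrowtail\mathbf{M}\twoheadrightarrow\mathbf{N}$ identifies $\mathbf{L}$ with a subrepresentation of $\mathbf{M}$ and $\mathbf{N}$ with the quotient $\mathbf{M}/\mathbf{L}$. Vertexwise it is therefore just a splitting $M_i\setminus\{0\}=(L_i\setminus\{0\})\sqcup(N_i\setminus\{0\})$, with arrows of $\mathbf{N}$ allowed to map into $\mathbf{L}$ but never the reverse. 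Yoneda equivalence of $n$-extensions is generated by morphisms of exact sequences fixing the ends. As in the additive case, every $n$-extension decomposes as a splice of short exact sequences $\mathbf{L}\rightarrowtail\mathbf{E}_n\twoheadrightarrow\mathbf{K}_{n-1}$, $\mathbf{K}_{n-1}\rightarrowtail\mathbf{E}_{n-1}\twoheadrightarrow\mathbf{K}_{n-2}$, and so on. I would first prove this decomposition, together with the fact that one may replace the middle syzygy and pull back and push out along morphisms, in this proto-exact setting. Pullbacks along admissible epis and pushouts along admissible monos exist by \cite{DK19}.

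The core step is a \emph{dévissage} lemma. It says that any class in $\Ext^1(\mathbf{K},\mathbf{L})$ with both terms arbitrary can be written, up to Yoneda equivalence, as a composite whose connecting syzygy is a direct sum (disjoint union at each vertex) of indecomposable representations of a special combinatorial shape, namely ``tree-like'' or ``string'' representations. I would classify indecomposable $\mathbb{F}_1$-representations via their coefficient quivers: a connected representation is a graph $\Gamma$ mapping to $Q$ with each vertex having at most one outgoing and at most one incoming edge per arrow type. A short exact sequence then corresponds to a partition of the vertices of $\Gamma$ into a successor-closed part ($\mathbf{L}$) and its complement ($\mathbf{N}$).

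Using this, I would aim for the following reduction. Given a 3-extension
\[
\mathbf{L}\rightarrowtail\mathbf{E}_3\to\mathbf{E}_2\to\mathbf{E}_1\twoheadrightarrow\mathbf{N},
\]
I would show that the syzygy $\mathbf{K}_1=\ker(\mathbf{E}_1\to\mathbf{N})$ can be replaced by a representation $\mathbf{K}_1'$ for which $\Ext^2(\mathbf{K}_1',\mathbf{L})=0$. The point is that one can choose $\mathbf{E}_1$ ``maximally free'': glue along the coefficient quiver of $\mathbf{N}$ so that $\mathbf{K}_1'$ consists of simple pieces, each of which has projective dimension at most $1$ in the sense of Yoneda. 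Concretely, I would prove a lemma that for every $\mathbf{N}$ there is a conflation $\mathbf{K}\rightarrowtail\mathbf{P}\twoheadrightarrow\mathbf{N}$ with $\mathbf{K}$ a disjoint union of representations whose coefficient quiver has a unique source, and that every extension of $\mathbf{N}$ by anything factors through it. I would then show $\Ext^2(\mathbf{K},-)=0$ for such $\mathbf{K}$, by a direct combinatorial analysis of how a unique-source connected graph can be built by successor-closed cuts. Dimension shifting then gives $\Ext^n=0$ for $n\geq3$, and the nilpotent case follows because all constructions stay within nilpotent representations when the inputs are nilpotent. For infinite quivers, the representations are finite, so only finitely many vertices and arrows are involved, and I would reduce to a finite full subquiver.

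The main obstacle is that the proto-exact category lacks projectives and exact long sequences, so dimension shifting is not automatic. In particular, proving that an arbitrary 2-extension starting at a unique-source $\mathbf{K}$ is Yoneda-trivial requires building explicit zig-zags of morphisms of extensions. I would attack this first on the coefficient-quiver level, constructing a common refinement of the two filtrations given by the 2-extension and the trivial one.
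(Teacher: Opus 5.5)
\textbf{There is a genuine gap: both load-bearing claims in your plan are false.}

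\textbf{First claim: $\Ext^2(\mathbf{K},-)=0$ for unique-source $\mathbf{K}$.} You assert this whenever $\mathbf{K}$ is a disjoint union of representations whose coefficient quivers have a unique source. Take the linearly oriented quiver $1\to 2\to 3$. By Lemma~\ref{lem:coefficient-indecomposable}, the coefficient quiver of an indecomposable is connected. It is acyclic because $Q$ is. Every vertex has in- and out-degree at most one, since each vertex of $Q$ has at most one incoming and one outgoing arrow and the structure maps are injective off $0$. So it is a directed path. By Krull--Schmidt, \emph{every} representation of this quiver lies in your class. Your lemma would then force $\opname{gldim}\opname{rep}(\mathbb{A}_3,\mathbb{F}_1)\le 1$, contradicting Theorem 3.10 of \cite{FRY24}, which is the input to Proposition~\ref{prop:subquiver-a3-case}. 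Similarly, on an oriented cycle the nilpotent simples $\mathbf{S}_i$ trivially have a unique source, yet $\Ext^2(\mathbf{S}_i,\mathbf{S}_j)\cong\mathbb{N}$.

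\textbf{Second claim: a universal conflation.} You want a single conflation $\mathbf{K}\rightarrowtail\mathbf{P}\twoheadrightarrow\mathbf{N}$ through which every extension of $\mathbf{N}$ factors. This fails because morphisms must be injective off the base point. Take the Kronecker quiver $a,b\colon 1\rightrightarrows 2$ and $\mathbf{N}=\mathbf{S}_1$ with generator $n$. There are extensions $\mathbf{E}$ with $E_1=\{0,n\}$ and:
\begin{itemize}
\item $E_a(n)=E_b(n)\neq 0$ (an extension by $\mathbf{S}_2$);
\item $E_a(n)\neq E_b(n)$, both nonzero (an extension by $\mathbf{S}_2\oplus\mathbf{S}_2$).
\end{itemize}
A morphism of conflations $\mathbf{P}\to\mathbf{E}$ over $\id_{\mathbf{N}}$ must send the unique $p\in P_1$ lying over $n$ to $n$. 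That forces $P_a(p)=P_b(p)$ in the first case and $P_a(p)\neq P_b(p)$ in the second. So there is no projective-presentation substitute and no dimension shifting. This is exactly the obstacle you flag at the end but do not resolve.

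\textbf{What the paper does instead.} It needs neither syzygies nor any vanishing of $\Ext^2$; the key is Lemma~\ref{lem:pull-back-rep}. Start from any $\mathbf{V}\xrightarrow{f}\mathbf{M}\overset{g}{\twoheadrightarrow}\mathbf{N}$ exact at $\mathbf{M}$ and $\mathbf{N}$, where $f$ need not be mono. One builds a conflation $\mathbf{V}\overset{\varphi}{\rightarrowtail}{}_f\mathbf{W}\overset{h}{\twoheadrightarrow}\mathbf{N}$ with $W_i=V_i\oplus N_i$. Along $\alpha\colon i\to j$, an element $0\neq y\in N_i$ is sent
\begin{itemize}
\item to $N_\alpha(y)$ if $g_jM_\alpha g_i^t(y)\neq 0$;
\item to $f_j^tM_\alpha g_i^t(y)\in V_j$ if $M_\alpha g_i^t(y)\neq 0$ is killed by $g_j$;
\item to $0$ otherwise.
\end{itemize}
It comes with $\psi\colon{}_f\mathbf{W}\to\mathbf{M}$, equal to $f$ on $\mathbf{V}$ and $g^t$ on $\mathbf{N}$, satisfying $\psi\varphi=f$ and $g\psi=h$.

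Now take a $3$-extension $\epsilon\colon\mathbf{L}\rightarrowtail\mathbf{U}\to\mathbf{V}\xrightarrow{f}\mathbf{M}\twoheadrightarrow\mathbf{N}$. The exact sequence $\mathbf{L}\xrightarrow{1}\mathbf{L}\xrightarrow{0}\mathbf{V}\rightarrowtail{}_f\mathbf{W}\twoheadrightarrow\mathbf{N}$ maps both to $\epsilon$ and to the trivial $3$-extension, so $[\epsilon]=0$. Lemma~\ref{lem:equiv=0} handles higher degrees. The nilpotent case and infinite quivers need no extra work.

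The argument uses the spare term $\mathbf{U}$ to absorb $\mathbf{L}$ by an identity. That is exactly why the bound is $2$, and why no syzygy-level vanishing of the kind you propose can exist.
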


The proof of Theorem \ref{thm:main-thm-1} is based on a non-additive version of a well-known result in the derived category of a quiver over a field (cf. Section \ref{s:upper-bound}). The key ingredient is to replace the classical octahedral axiom with an explicit combinatorial lifting construction. 

According to Theorem \ref{thm:main-thm-1}, the global dimension of $\opname{rep}(Q,\mathbb{F}_1)$ can only take values in $\{0, 1, 2\}$. This restriction naturally motivates a classification of quivers according to their global dimensions. By employing the restriction and induction functors for subquivers, we obtain a complete classification basing on its global dimension.

\begin{theorem}\label{thm:main-thm-2}
Let $Q$ be a connected quiver. 
\begin{itemize}
    \item[(1)] $\opname{gldim}\opname{rep}(Q,\mathbb{F}_1)=0$ if and only if $Q$ consists of a single vertex.
    \item[(2)] $\opname{gldim}\opname{rep}(Q,\mathbb{F}_1)=1$ if and only if $Q$ is bipartite but not a single vertex.
    \item[(3)] $\opname{gldim}\opname{rep}(Q,\mathbb{F}_1)=2$ if and only if $Q$ is non-bipartite, equivalently, $Q$ admits either an oriented cycle or a subquiver $\xymatrix{\cdot\ar[r]&\cdot\ar[r]&\cdot}$.
\end{itemize}
\end{theorem}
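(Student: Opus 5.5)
We propose to prove the three parts together. Since Theorem \ref{thm:main-thm-1} forces $\opname{gldim}\in\{0,1,2\}$, and the conditions in (1), (2), (3) are mutually exclusive and cover all connected quivers, it suffices to show three implications: a single vertex gives global dimension $0$; a bipartite quiver with at least one arrow gives global dimension exactly $1$; and a non-bipartite quiver gives $\Ext^2\neq 0$. Here ``bipartite'' means every vertex is a source or a sink, i.e.\ there is no path of length $2$. For a connected quiver this is equivalent to having neither an oriented cycle nor a subquiver $\cdot\to\cdot\to\cdot$. Indeed, a loop or an oriented $2$-cycle already contains a path of length $2$, and conversely a path of length $2$ is either the linear subquiver $\cdot\to\cdot\to\cdot$ or lies on an oriented cycle. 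Recording this combinatorial equivalence is the first step.

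\textbf{The single vertex.} Here $\opname{rep}(Q,\mathbb{F}_1)$ is the category of finite pointed sets with admissible short exact sequences. Every such sequence splits, since $\mathbf{N}\hookrightarrow\mathbf{M}\twoheadrightarrow\mathbf{L}$ forces $\mathbf{M}\cong\mathbf{N}\oplus\mathbf{L}$. Hence $\Ext^1=0$, and every higher Yoneda class is trivial because it factors through a split conflation.

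\textbf{The bipartite case.} Nonvanishing of $\Ext^1$ comes from an arrow $\alpha\colon i\to j$: the indecomposable representation supported on $i\xrightarrow{\alpha}j$ gives a non-split extension $S_j\to M\to S_i$. It remains to show $\Ext^2=0$. Take a $2$-extension $\mathbf{N}\to \mathbf{E}_2\to \mathbf{E}_1\to \mathbf{L}$, splice it as $\mathbf{N}\to\mathbf{E}_2\to\mathbf{K}$ and $\mathbf{K}\to\mathbf{E}_1\to\mathbf{L}$, and aim to build an equivalent $2$-extension that is visibly trivial. Because every vertex is a source or a sink, each representation $\mathbf{X}$ has a canonical conflation $\mathbf{X}_{\rm sink}\to\mathbf{X}\to\mathbf{X}_{\rm source}$ of semisimple pieces supported on sinks and on sources. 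We would use this decomposition to replace $\mathbf{K}$ by a representation through which the class factors via a split sequence. Concretely, we would pull back along $\mathbf{K}_{\rm sink}\to\mathbf{K}$ and push out along $\mathbf{K}\to\mathbf{K}_{\rm source}$; the length-$2$ paths needed to obstruct this are absent. Alternatively, we would reuse the lifting construction from Section \ref{s:upper-bound} directly, observing that the combinatorial lift always exists in the bipartite case.

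\textbf{The non-bipartite case.} By the first step, $Q$ contains a path $i\xrightarrow{\alpha}j\xrightarrow{\beta}k$, possibly with repeated vertices. Our plan is to exhibit a nonzero class in $\Ext^2$ on the subquiver first, modelled on the $\mathbb{A}_3$ computation of \cite{FRY24}: the class obtained by splicing $S_k\to M_{jk}\to S_j$ with $S_j\to M_{ij}\to S_i$. We would then transport it to $Q$ using the restriction and induction functors for subquivers. This needs two facts: these functors are exact on conflations and adjoint, so they induce maps on Yoneda Ext; and restriction after induction is the identity on the subquiver, so a nonzero class maps to a nonzero class. For the oriented-cycle case (loop, $2$-cycle, longer cycle), where $i,j,k$ need not be distinct, we would instead show nonvanishing directly on the cycle quiver, preferably in the nilpotent subcategory, with an explicit invariant distinguishing the class from the trivial one.

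\textbf{Main obstacle.} We expect the main difficulty to be proving non-triviality of a Yoneda $2$-class without additive tools. Equivalence of $2$-extensions is generated by zigzags, so we need an invariant preserved under the elementary morphisms of $2$-extensions that is nonzero on our class and zero on trivial ones. We would first try defining it from how $\beta$ acts on the middle terms: in any equivalent extension, some element at $i$ must map via $\beta\alpha$ to a nonzero element at $k$. This is impossible for the trivial class and is preserved by morphisms of extensions, because morphisms of $\mathbb{F}_1$-representations preserve arrow actions.
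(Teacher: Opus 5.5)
\textbf{Verdict: genuine gap.} Several parts of your plan are fine: the reduction via Theorem~\ref{thm:main-thm-1} and the trichotomy, the single-vertex case, $\Ext^1\neq 0$ from an arrow, and the linear $\mathbb{A}_3$ case (citing \cite{FRY24}*{Theorem 3.10} and transporting along $\iota$). The gap is the non-bipartite case when your length-two path $i\to j\to k$ has repeated vertices. A simple oriented cycle of length $\geq 3$ already contains a subquiver $\cdot\to\cdot\to\cdot$, so the uncovered case is a quiver with a loop or an oriented $2$-cycle, e.g.\ the Jordan quiver or $1\rightleftarrows 2$.

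For these quivers you only promise ``an explicit invariant'', but this is the substantive input of the classification. The paper imports from \cite{FYZ25} the computation $\Ext^2_{Q'}(\mathbf{S}_i,\mathbf{S}_j)_{\rm nil}\cong\mathbb{N}$ for a cyclic quiver $Q'$ (Lemma~\ref{lem:2-ext-infinite}). It then proves Lemma~\ref{lem:identify-subquiver-ext} to pass from $\opname{rep}(Q',\mathbb{F}_1)_{\rm nil}$ to $\opname{rep}(Q',\mathbb{F}_1)$. Your ``preferably in the nilpotent subcategory'' skips this second step, and it is not automatic. The full category admits zigzags through non-nilpotent $2$-extensions, so a class that is nonzero in the nilpotent subcategory could a priori become zero. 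The paper handles this with Lemma~\ref{lem:cyclic-non-nilpotent-indecom} and Corollary~\ref{cor:vanish-hom}: non-nilpotent indecomposables over a cycle are simple and Hom-orthogonal to nilpotent ones. By Krull--Schmidt they therefore split off as identity summands of the middle map.

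The invariant you sketch for non-triviality also does not work.
\begin{itemize}
\item In your own representative $\mathbf{S}_k\rightarrowtail\mathbf{M}_{jk}\to\mathbf{M}_{ij}\twoheadrightarrow\mathbf{S}_i$, no element $x$ at $i$ of a middle term has $\beta\alpha(x)\neq 0$, because $\mathbf{M}_{jk}$ vanishes at $i$ and $\mathbf{M}_{ij}$ vanishes at $k$. So the property fails on the very class it is meant to detect.
\item It is not stable under $\leadsto$. An $\mathbb{F}_1$-linear map can send the nonzero element $\beta\alpha(x)$ to $0$, and an invariant of Yoneda equivalence must survive both directions of a zigzag.
\end{itemize}
Separately, adjointness of $\iota$ and $\text{Res}$ is neither needed nor true in general. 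Exactness and $\text{Res}\circ\iota=\id$ suffice, as in Lemma~\ref{lem:embedding-ext}.

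Your bipartite vanishing argument is only gestured at, but the sink/source idea does complete. Let $\mathbf{K}=\im(\mathbf{E}_2\to\mathbf{E}_1)$ and $g\colon\mathbf{E}_1\twoheadrightarrow\mathbf{L}$.
\begin{itemize}
\item Let $\mathbf{E}_2'$ be the preimage of $\mathbf{K}_{\rm sink}$ in $\mathbf{E}_2$. It agrees with $\mathbf{N}$ at sources, so the remaining elements sit at sinks, emit no arrows and receive arrows only from $\mathbf{N}$. Hence $\mathbf{E}_2'\cong\mathbf{N}\oplus\mathbf{K}_{\rm sink}$.
\item Let $\mathbf{E}_1'\subseteq\mathbf{E}_1$ be $g_i^t(L_i)$ at each source $i$ and all of $\mathbf{E}_1$ at sinks.
\end{itemize}
Then $\epsilon\leftleadsto(\mathbf{N}\rightarrowtail\mathbf{E}_2'\to\mathbf{E}_1'\twoheadrightarrow\mathbf{L})\leadsto 0$. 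This is the paper's proof of Theorem~\ref{thm:bipartite-case} with the coarser choice $\mathbf{U}=\mathbf{V}_{\rm sink}$ in the paper's notation. That choice spares you the $\mathbb{F}_1$-linearity check for $[\kappa\ u]$.
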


The paper is organized as follows. In Section \ref{s:preliminary}, we give a brief introduction on quiver representations over the virtual filed $\mathbb{F}_1$ and Yoneda's construction of extension spaces. Section \ref{s:upper-bound} devotes to proving Theorem \ref{thm:main-thm-1}. In Section \ref{s:classification}, we introduce the embedding functor and the restriction functor for subquivers, which enables us to establish a relationship between extension spaces of subquivers and the whole quiver.
 In particular, Theorem \ref{thm:main-thm-2} (3) is proved basing the embedding functor. We present an elementary construction in the bipartite cases in subsection \ref{ss:bipartie-case}, and finish the proof of Theorem \ref{thm:main-thm-2} in subsection \ref{ss:proof-thm-2}.

\section{Preliminary}\label{s:preliminary}
\subsection{Quiver representations over \texorpdfstring{$\mathbb{F}_1$}{F1}}
 We follow \cites{Sz12, JS23} and refer to \cite{Sz12} for unexplained definitions related to representations of quivers over $\mathbb{F}_1$.

A finite-dimensional {\it $\mathbb{F}_1$-vector space} is a finite pointed set $V:=(V,0_V)$. The dimension of $V$ is defined as $\dim V:=|V|-1$. An {\it $\mathbb{F}_1$-linear map} from $V:=(V,0_V)$ to $W:=(W,0_W)$  is a pointed function $f:V\to W$  such that $f|_{V\backslash f^{-1}(0_W)}$ is an injection.   For an $\mathbb{F}_1$-linear map $f:V\to W$, we denote by $f^t: W\to V$ the {\it duality} of $f$, which is the pointed function defined by $f^t(w)=f^{-1}(w)$ for $0_W\neq w\in \opname{im} f=f(V)$ and $f^t(w)=0_V$ otherwise.

Let $Q=(Q_0,Q_1,s,t)$ be a (finite) quiver. A {\it representation} of $Q$ over $\mathbb{F}_1$ is a collection \[\mathbf{M}=(M_i, M_\alpha)_{i\in Q_0,\alpha\in Q_1},\] where
\begin{itemize}
    \item $M_i$ is a finite-dimensional $\mathbb{F}_1$-vector space for each vertex $i\in Q_0$;
    \item $M_\alpha:M_{s(\alpha)}\to M_{t(\alpha)}$ is an $\mathbb{F}_1$-linear map for each arrow $\alpha\in Q_1$.
\end{itemize}
A representation $\mathbf{M}$ is {\em finite-dimensional} if $\dim \mathbf{M}:=\sum_{i\in Q_0}\dim M_i<\infty$. If $Q$ is a finite quiver, then every representation is a finite-dimensional representation by our convention.
A representation $\mathbf{M}=(M_i,M_\alpha)$ is {\it nilpotent}, if there exists $N\geq 0$ such that for any $n\geq N$, $M_{\alpha_n}\circ\cdots\circ M_{\alpha_1}=0$ for any path $\alpha_n\alpha_{n-1}\cdots \alpha_1$ in $Q$.

Let $\mathbf{M}=(M_i,M_\alpha)$ and $\mathbf{N}=(N_i,N_\alpha)$ be representations of $Q$ over $\mathbb{F}_1$. A morphism $\Phi:\mathbf{M}\to \mathbf{N}$ is a collection of $\mathbb{F}_1$-linear maps $\Phi=(\phi_i)_{i\in Q_0}$, where $\phi_i:M_i\to N_i$, such that the following diagram commutes for each arrow $\alpha:i\to j\in Q_1$:
\[
\xymatrix{M_i\ar[r]^{M_\alpha}\ar[d]^{\phi_i}&M_j\ar[d]^{\phi_j}\\
 N_i\ar[r]^{N_\alpha}&N_j.}
\]
The kernel $\ker \Phi$ and image $\opname{im} \Phi$ of $\Phi$ can be defined as usual, which are subrepresentations of $\mathbf{M}$ and $\mathbf{N}$ respectively (cf. \cite{Sz12}*{Section 4}).

Let $\opname{rep}(Q,\mathbb{F}_1)$ be the category of representations of $Q$ over $\mathbb{F}_1$ and $\opname{rep}(Q,\mathbb{F}_1)_{\rm nil}$ the full subcategory of $\opname{rep}(Q,\mathbb{F}_1)$ consisting of nilpotent representations. The category $\opname{rep}(Q,\mathbb{F}_1)$ is not additive, but it shares many good properties with the category of representations of $Q$ over a field. For instance, every morphism has a kernel and a cokernel. It is proto-exact in the sense of Dyckerhoff and Kapranov \cite{DK19}, where all monomorphisms and epimorphisms are admissible.  Since the category is normal, this structure is equivalent to being a proto-abelian category in the sense of \cite{A09}. It is also combinatorial with exact direct sum in the sense of \cite{ELY22}.
Both the
Jordan--H\"{o}lder and Krull--Schmidt theorems hold in $\opname{rep}(Q,\mathbb{F}_1)$; we refer to \cite{Sz12}*{Section 4} for more details. 

\subsection{Coefficient quivers}
Let $\mathbf{V}=(V_i,V_\alpha)_{i\in Q_0,\alpha\in Q_1}\in \opname{rep}(Q,\mathbb{F}_1)$ be a representation. The {\em coefficient quiver} $\Gamma_{\mathbf{V}}$ \cite{JS23} of $\mathbf{V}$ is the quiver defined as follows:
\begin{itemize}
    \item The vertex set is $\bigsqcup_{i\in Q_0} V_i\backslash\{0\}$;
    \item There is an arrow $v\to w$ if there exists an arrow $\alpha\in Q_1$ such that $V_\alpha(v)=w$. In this case, it follows that $v\in V_{s(\alpha)}$ and $w\in V_{t(\alpha)}$.
\end{itemize}
The following properties were established in \cite{JS23}*{Lemma 3.7}.
\begin{lemma}\label{lem:coefficient-indecomposable}
    Let $\mathbf{V}\in \opname{rep}(Q,\mathbb{F}_1)$. The following statement holds:
    \begin{itemize}
        \item[(1)] $\mathbf{V}$ is indecomposable if and only if $\Gamma_\mathbf{V}$ is connected.
        \item[(2)] $\mathbf{V}$ is nilpotent if and only if $\Gamma_\mathbf{V}$ is acyclic.
    \end{itemize}
\end{lemma}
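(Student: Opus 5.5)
This is Lemma \ref{lem:coefficient-indecomposable}, which the paper cites from \cite{JS23}*{Lemma 3.7}. Here is how I would prove both parts directly from the definitions.

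\emph{Part (1).} First, direct sums correspond to disjoint unions of coefficient quivers. If $\mathbf{V}=\mathbf{V}'\oplus\mathbf{V}''$, then each $V_i$ is the wedge of $V'_i$ and $V''_i$ along the base point, and each $V_\alpha$ preserves both summands. Hence $\Gamma_{\mathbf{V}}=\Gamma_{\mathbf{V}'}\sqcup\Gamma_{\mathbf{V}''}$, with no arrows between the two parts. If both summands are nonzero, $\Gamma_{\mathbf{V}}$ is disconnected. This proves that connectedness implies indecomposability, by contraposition.

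Conversely, suppose $\Gamma_{\mathbf{V}}$ is disconnected, and split its vertex set as $C\sqcup C'$ with no arrows in either direction between $C$ and $C'$. Set $V'_i=(C\cap V_i)\cup\{0\}$ and $V''_i=(C'\cap V_i)\cup\{0\}$. For $v\in C\cap V_{s(\alpha)}$, either $V_\alpha(v)=0$ or $V_\alpha(v)$ is joined to $v$ by an arrow, so it lies in $C$. Thus $V_\alpha$ restricts to $\mathbb{F}_1$-linear maps $V'_{s(\alpha)}\to V'_{t(\alpha)}$ and $V''_{s(\alpha)}\to V''_{t(\alpha)}$; injectivity off the kernel is inherited from $V_\alpha$. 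This makes $\mathbf{V}'$ and $\mathbf{V}''$ nonzero subrepresentations with $\mathbf{V}=\mathbf{V}'\oplus\mathbf{V}''$, so $\mathbf{V}$ is decomposable.

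\emph{Part (2).} The key observation is that paths in $Q$ acting on a vector trace paths in $\Gamma_{\mathbf{V}}$. Let $p=\alpha_n\cdots\alpha_1$ be a path in $Q$ and $v\in V_{s(\alpha_1)}\setminus\{0\}$. Then $V_{\alpha_n}\circ\cdots\circ V_{\alpha_1}(v)\neq 0$ exactly when there is a path of length $n$ in $\Gamma_{\mathbf{V}}$ starting at $v$ whose arrows are labelled $\alpha_1,\dots,\alpha_n$.

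Suppose $\Gamma_{\mathbf{V}}$ contains an oriented cycle. Going around it repeatedly gives nonzero compositions along arbitrarily long paths of $Q$, so $\mathbf{V}$ is not nilpotent. Suppose instead that $\Gamma_{\mathbf{V}}$ is acyclic. It is finite, since $\mathbf{V}$ is finite-dimensional by convention. So every path in $\Gamma_{\mathbf{V}}$ has length at most $N=\dim\mathbf{V}$, because a longer path would repeat a vertex. Hence every composition along a path of $Q$ of length greater than $N$ sends every nonzero vector to $0$, and $\mathbf{V}$ is nilpotent.

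The only subtle point is the labelling of arrows in $\Gamma_{\mathbf{V}}$. One arrow of $\Gamma_{\mathbf{V}}$ may come from several arrows $\alpha$ of $Q$. I would therefore remember the labels when translating paths, to make sure each path in $\Gamma_{\mathbf{V}}$ genuinely lifts to a path in $Q$ with nonzero composition. Since an arrow $v\to w$ exists precisely when some $\alpha$ satisfies $V_\alpha(v)=w$, choosing any such $\alpha$ for each step gives the lift, and no real obstacle remains.
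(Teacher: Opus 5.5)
Your proof is correct, and there is nothing in the paper to compare it against: the paper proves nothing here and only cites \cite{JS23}. Your direct argument is the standard self-contained proof. For (1), splitting the vertex set of $\Gamma_{\mathbf{V}}$ into unions of components is exactly a wedge decomposition $V_i=V_i'\vee V_i''$ preserved by every $V_\alpha$. For (2), a composite $V_{\alpha_n}\circ\cdots\circ V_{\alpha_1}$ is nonzero on $v$ exactly when it traces a length-$n$ path of $\Gamma_{\mathbf{V}}$ starting at $v$.

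Three small remarks:
\begin{itemize}
\item Your lift of an unlabelled path of $\Gamma_{\mathbf{V}}$ to a path of $Q$ is automatically composable. The sets $V_i\setminus\{0\}$ are disjoint, so each vertex of $\Gamma_{\mathbf{V}}$ determines its vertex of $Q$, and consecutive chosen labels $\beta_m,\beta_{m+1}$ satisfy $t(\beta_m)=s(\beta_{m+1})$.
\item The finite-dimensionality you invoke is genuinely needed in (2), which matters because the paper also allows infinite quivers. For the infinite linearly oriented quiver $1\to 2\to 3\to\cdots$, take $V_i=\{0,v_i\}$ with $v_i\mapsto v_{i+1}$. Its coefficient quiver is acyclic, but the representation is not nilpotent.
\item Part (1) relies on the usual convention that the empty quiver $\Gamma_{\mathbf{0}}$ is not connected, matching the fact that $\mathbf{0}$ is not indecomposable.
\end{itemize}
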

As an application of the coefficient quiver, we have the following result.
\begin{lemma}\label{lem:cyclic-non-nilpotent-indecom}
    Let $Q$ be a cyclic quiver with vertex set $\{1, \dots, n\}$ and arrow set $\{\alpha_i : i \to i+1\}$ (with indices taken modulo $n$). Let $\mathbf{M} \in \opname{rep}(Q, \mathbb{F}_1)$ be an indecomposable, non-nilpotent representation. Then $\Gamma_\mathbf{M}$ is a cyclic quiver with $mn$ vertices for some positive integer $m$, and $\mathbf{M}$ is simple.
\end{lemma}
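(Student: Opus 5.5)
We study the coefficient quiver $\Gamma_{\mathbf{M}}$ directly. Its vertices are the nonzero elements of $\bigsqcup_i M_i$. Each arrow of $\Gamma_{\mathbf{M}}$ lies over some $\alpha_i$, and $M_{\alpha_i}$ is injective away from its kernel. So every vertex of $\Gamma_{\mathbf{M}}$ has out-degree at most $1$: the single arrow $\alpha_i$ leaving vertex $i$ sends $v$ to $M_{\alpha_i}(v)$ if this is nonzero. Every vertex also has in-degree at most $1$: only $\alpha_{i-1}$ enters vertex $i$, and $M_{\alpha_{i-1}}$ is injective on elements with nonzero image. A finite connected quiver with all in- and out-degrees at most $1$ is either an oriented path (linearly oriented type $\mathbb{A}$) or an oriented cycle. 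Since $\mathbf{M}$ is indecomposable, $\Gamma_{\mathbf{M}}$ is connected by Lemma~\ref{lem:coefficient-indecomposable}(1). Since $\mathbf{M}$ is non-nilpotent, $\Gamma_{\mathbf{M}}$ is not acyclic by Lemma~\ref{lem:coefficient-indecomposable}(2). Hence $\Gamma_{\mathbf{M}}$ is an oriented cycle. Every arrow of $\Gamma_{\mathbf{M}}$ moves the vertex index from $i$ to $i+1$ modulo $n$, so travelling once around the cycle changes the index by a multiple of $n$. The cycle therefore has $mn$ vertices for some $m\ge 1$.

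For simplicity, let $\mathbf{N}\subseteq \mathbf{M}$ be a nonzero subrepresentation. Then $N_i\subseteq M_i$ is a pointed subset for each $i$, stable under every $M_{\alpha_i}$. Pick a nonzero $v\in N_i$. Because $\Gamma_{\mathbf{M}}$ is a cycle, repeatedly applying the maps $M_{\alpha}$ to $v$ never gives $0$, and it visits every vertex of $\Gamma_{\mathbf{M}}$. Hence $N_j=M_j$ for all $j$, so $\mathbf{N}=\mathbf{M}$. In the proto-exact setting, subobjects are precisely subrepresentations (kernels and monomorphisms are the inclusions). It follows that $\mathbf{M}$ has no proper nonzero subobject and is simple.

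The main care point is the degree bound. It depends on the quiver having exactly one arrow into and one arrow out of each vertex, which holds for the cyclic quiver; when $n=1$ we have a loop, and the argument is unchanged. The only other point to check is that $\mathbb{F}_1$-linearity gives injectivity on the complement of the kernel, which is exactly the definition of an $\mathbb{F}_1$-linear map.
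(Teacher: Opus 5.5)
Your proof is correct and follows the same route as the paper. Lemma~\ref{lem:coefficient-indecomposable} makes $\Gamma_{\mathbf{M}}$ connected and not acyclic, hence an oriented cycle, and the orbit of any nonzero element of a subrepresentation exhausts all of $\mathbf{M}$. Your in-/out-degree bound and the index count giving $mn$ vertices just make explicit what the paper treats as following ``directly from the definition of the coefficient quiver.''
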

\begin{proof}
    Since $Q$ is a cyclic quiver, the assertion that $\Gamma_\mathbf{M}$ is also a cyclic quiver follows directly from the definition of the coefficient quiver and Lemma~\ref{lem:coefficient-indecomposable}.
    
    To show that $\mathbf{M}$ is simple, let $\mathbf{N}$ be a nonzero subrepresentation of $\mathbf{M}$. Without loss of generality, we may assume there exists a nonzero element $x \in N_1 \subseteq M_1$. For any path $p = \alpha_k \cdots \alpha_2 \alpha_1$ starting at vertex $1$, we have 
\[
M_p(x) := M_{\alpha_k} \cdots M_{\alpha_2} M_{\alpha_1}(x) \neq 0.
\]
Since $\Gamma_\mathbf{M}$ is a cyclic quiver, the set of all such evaluations \[\{M_p(x) \mid p \text{ is a path starting at vertex } 1\}\] exhausts all nonzero elements in the vertices of $\Gamma_\mathbf{M}$. Because $\mathbf{N}$ is a subrepresentation containing $x$, it must contain all these elements. It follows that $\mathbf{N} = \mathbf{M}$, completing the proof.
\end{proof}
The following is a direct consequence of Lemma \ref{lem:cyclic-non-nilpotent-indecom}.
\begin{corollary}\label{cor:vanish-hom}
    Let $Q$ be a cyclic quiver. Let $\mathbf{M} \in \opname{rep}(Q, \mathbb{F}_1)$ be an indecomposable, non-nilpotent representation, and let $\mathbf{N} \in \opname{rep}(Q, \mathbb{F}_1)$ be an indecomposable, nilpotent representation. Then $\Hom(\mathbf{M}, \mathbf{N}) = 0 = \Hom(\mathbf{N}, \mathbf{M})$.
\end{corollary}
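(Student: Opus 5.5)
The plan is to reduce both vanishing statements to Lemma~\ref{lem:cyclic-non-nilpotent-indecom}, via the elementary behaviour of images and kernels in $\opname{rep}(Q,\mathbb{F}_1)$.

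\emph{First vanishing.} Let $\Phi:\mathbf{M}\to\mathbf{N}$ be a morphism. By Lemma~\ref{lem:cyclic-non-nilpotent-indecom}, $\mathbf{M}$ is simple. The kernel $\ker\Phi$ is a subrepresentation of $\mathbf{M}$, so it is either $0$ or $\mathbf{M}$. If $\ker\Phi=\mathbf{M}$, then $\Phi=0$ and we are done. Suppose instead that $\ker\Phi=0$, so every $\phi_i$ is injective on nonzero elements. Pick a nonzero $x\in M_1$. Since $\Gamma_\mathbf{M}$ is a cyclic quiver, the path $p=(\alpha_n\cdots\alpha_1)^k$ satisfies $M_p(x)\neq 0$ for every $k\ge 1$. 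Then $N_p(\phi_1(x))=\phi_1(M_p(x))\neq 0$ for every $k$, because $\phi_1$ does not send nonzero elements to $0$. This contradicts the nilpotency of $\mathbf{N}$, so this case cannot occur.

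\emph{Second vanishing.} Let $\Psi:\mathbf{N}\to\mathbf{M}$ be a morphism. Its image $\opname{im}\Psi$ is a subrepresentation of $\mathbf{M}$, so by simplicity it is either $0$ or $\mathbf{M}$. If it is $0$, then $\Psi=0$. Suppose it is $\mathbf{M}$. Choose a nonzero $x\in M_1$ with $x=\psi_1(y)$. As before, $M_p(x)\neq 0$ for all powers $p$ of the cycle. By commutativity, $\psi_1(N_p(y))=M_p(x)\neq 0$, so $N_p(y)\neq 0$ for all $k$. This again contradicts the nilpotency of $\mathbf{N}$. Note that this argument only needs one nonzero preimage, so indecomposability of $\mathbf{N}$ is not actually used.

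\emph{Main obstacle.} The delicate point is the first case, where I must justify that the kernel of an $\mathbb{F}_1$-morphism is a genuine subrepresentation, namely the sets $\phi_i^{-1}(0)$ with the restricted maps. The closure property needed is that if $\phi_i(v)=0$ then $\phi_j(M_\alpha v)=N_\alpha\phi_i(v)=0$, which follows from commutativity.

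As an alternative for the first case, one could avoid kernels altogether. It suffices to observe that $\phi_1(x)\ne 0$ for some nonzero $x$, since otherwise the set of elements killed by $\Phi$ contains $x$ and hence, by closure, everything. This reduces the claim directly to the exhaustion argument in the proof of Lemma~\ref{lem:cyclic-non-nilpotent-indecom}.
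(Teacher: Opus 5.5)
Your proof is correct and is essentially the argument the paper intends when it calls this a direct consequence of Lemma~\ref{lem:cyclic-non-nilpotent-indecom}. Simplicity of $\mathbf{M}$ makes a nonzero morphism out of $\mathbf{M}$ injective and a nonzero morphism into $\mathbf{M}$ surjective, and since $\Gamma_\mathbf{M}$ is a cycle, every path sends nonzero elements of $\mathbf{M}$ to nonzero elements, contradicting nilpotency of $\mathbf{N}$. Your side remark is also right: indecomposability of $\mathbf{N}$ is never used, and in the second part even simplicity of $\mathbf{M}$ is unnecessary, because one nonzero element in the image already gives the contradiction.
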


\subsection{Yoneda's Extension} 
Although the category $\opname{rep}(Q,\mathbb{F}_1)$  is not additive, one may apply Yoneda's construction to define the extension space $\operatorname{Ext}^i(\mathbf{N},\mathbf{L})$ for any $\mathbf{L},\mathbf{N}\in \opname{rep}(Q,\mathbb{F}_1)$.
In this section, we recall Yoneda's extension for $\opname{rep}(Q,\mathbb{F}_1)$, following \cite{FRY24}.

Denote by $\mathbf{0}$ the zero object of $\opname{rep}(Q,\mathbb{F}_1)$.
An exact sequence of length $n+2$ of $\opname{rep}(Q,\mathbb{F}_1)$ is a sequence of morphism
\[
\xymatrix{\mathbf{L}\ \ar[r]^{\alpha_0}&\mathbf{M}_1\ar[r]^{\alpha_1}&\mathbf{M}_2\ar[r]^{\alpha_2}&\cdots\ar[r]^{\alpha_{n-1}}&\mathbf{M}_n\ar[r]^{\alpha_n}&\mathbf{N}}
\]
 such that 
 \begin{itemize}
     \item $\ker \alpha_{i+1}=\opname{im} \alpha_i$ for $0\leq i\leq n-1$;
     \item $\alpha_0$ is a monomorphism, equivalently, $\ker\alpha_0=\mathbf{0}$;
     \item $\alpha_n$ is an epimorphism, equivalently, $\opname{im} \alpha_n=\mathbf{N}$.
 \end{itemize}  We will use $\rightarrowtail$ (resp. $\twoheadrightarrow$) to indicate a morphism is a monomorphism  (resp. an epimorphism).


Let $\mathbb{E}^n(\mathbf{N},\mathbf{L})$ be the set of exact sequences of length $n+2$ that start at $\mathbf{L}$ and end at $\mathbf{N}$. An element in $\mathbb{E}^n(\mathbf{N},\mathbf{L})$ is called an {\it $n$-extension} of $\mathbf{N}$ by $\mathbf{L}$. 
Recall that the $n$-extensions $\epsilon$ and $\epsilon'$ of $\mathbf{N}$ by $\mathbf{L}$ satisfy the relation $\epsilon\leadsto\epsilon'$ if there is a commutative diagram
\[
\xymatrix{\epsilon: &\mathbf{L}\ \ar@{=}[d]\ar@{>->}[r]&\mathbf{E}_n\ar[d]^{f_n}\ar[r]&\cdots\ar[r]&\mathbf{E}_1\ar[d]^{f_1}\ar@{->>}[r] &\mathbf{N}\ar@{=}[d]\\
\epsilon':&\mathbf{L}\ \ar@{>->}[r]&\mathbf{E}_n'\ar[r]&\cdots\ar[r]&\mathbf{E}_1'\ar@{->>}[r] &\mathbf{N}.
}
\]
 Moreover, if $f_1,\dots, f_n$ are isomorphisms, we say that $\epsilon$ is isomorphic to $\epsilon'$, and denote it by $\epsilon\cong \epsilon'$.
The relation $\leadsto$  generates an equivalence relation on $\mathbb{E}^n(\mathbf{N},\mathbf{L})$. Namely, two $n$-extensions $\epsilon$ and $\epsilon'$ of $\mathbf{N}$ by $\mathbf{L}$ is {\em equivalent}, if and only if  there exists a chain $\epsilon_0=\epsilon, \epsilon_1,\ldots, \epsilon_k=\epsilon'$ with
\[
\epsilon_0\leadsto \epsilon_1\leftleadsto \epsilon_2\leadsto\cdots\leftleadsto \epsilon_k.
\]
We denote by $[\epsilon]$ the equivalence class of the $n$-extension $\epsilon$, and by $\Ext^n(\mathbf{N},\mathbf{L})$ the set of all equivalence classes of $n$-extensions of $\mathbf{N}$ by $\mathbf{L}$. The set $\Ext^n(\mathbf{N},\mathbf{L})$ is a pointed set with
\[0=[\xymatrix{\mathbf{L}\ar@{=}[r]&\mathbf{L}\ar[r]&\mathbf{0}\ar[r]&\cdots\ar[r]&\mathbf{0}\ar[r]&\mathbf{N}\ar@{=}[r]&\mathbf{N}}]
\] for $n\geq 2$ and 
\[\xymatrix{0=[\mathbf{L}\ \ar@{>->}[r]^-{\tiny\begin{bmatrix}1_{\mathbf{L}}&0\end{bmatrix}}&\mathbf{L}\oplus \mathbf{N}\ar@{->>}[r]^-{\tiny\begin{bmatrix}0\\ 1_{\mathbf{N}}\end{bmatrix}}&\mathbf{N}]}
\]
for $n=1$. 
The following is obvious, see \cite{FRY24}*{Lemma 2.4} or \cite{FYZ25}*{Lemma 2.3}.
\begin{lemma}\label{lem:equiv=0}
 For any $n\geq 1$ and $\mathbf{L}\in \opname{rep}(Q,\mathbb{F}_1)$, if $\Ext^n(\mathbf{L},-)=0$, then $\Ext^{n+1}(\mathbf{L},-)=0$.
\end{lemma}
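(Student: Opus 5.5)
The statement is Lemma~\ref{lem:equiv=0}: if $\Ext^n(\mathbf{L},-)=0$ then $\Ext^{n+1}(\mathbf{L},-)=0$. I will show directly that every $(n+1)$-extension
\[
\epsilon:\ \mathbf{N}\rightarrowtail \mathbf{E}_{n+1}\to\cdots\to \mathbf{E}_2\to\mathbf{E}_1\twoheadrightarrow \mathbf{L}
\]
is equivalent to the trivial one. Here $\mathbf{N}$ is arbitrary and $\mathbf{L}$ sits at the right end.

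\textbf{Step 1: splice.} I cut $\epsilon$ at the first cycle. Put $\mathbf{K}=\ker(\mathbf{E}_1\twoheadrightarrow \mathbf{L})$, which equals $\opname{im}(\mathbf{E}_2\to\mathbf{E}_1)$. This gives two pieces:
\begin{itemize}
\item[(a)] a short exact sequence $\eta:\ \mathbf{K}\rightarrowtail \mathbf{E}_1\twoheadrightarrow \mathbf{L}$, an element of $\mathbb{E}^1(\mathbf{L},\mathbf{K})$;
\item[(b)] an $n$-extension $\theta:\ \mathbf{N}\rightarrowtail \mathbf{E}_{n+1}\to\cdots\to\mathbf{E}_2\twoheadrightarrow \mathbf{K}$.
\end{itemize}
These pieces exist because kernels and images exist in $\opname{rep}(Q,\mathbb{F}_1)$. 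Also, $\mathbf{E}_2\to\mathbf{K}$ is an epimorphism and $\mathbf{K}\to\mathbf{E}_1$ is a monomorphism. The sequence $\epsilon$ is recovered as the Yoneda composite $\theta\circ\eta$.

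\textbf{Step 2: transport $\leadsto$ through the splice.} By hypothesis $\Ext^n(\mathbf{L},-)=0$, so $\Ext^1(\mathbf{L},\mathbf{K})=0$ when $n=1$. For larger $n$ I first reduce to the $\Ext^1$ case by the same splicing applied inductively, or simply use the hypothesis in the form $[\eta]=0$. With $[\eta]=0$ there is a zigzag $\eta=\eta_0\leadsto\eta_1\leftleadsto\cdots\eta_k$ ending at the split sequence $\mathbf{K}\rightarrowtail\mathbf{K}\oplus\mathbf{L}\twoheadrightarrow\mathbf{L}$.

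Each morphism of $1$-extensions $\eta_i\to\eta_{i+1}$ that is the identity on $\mathbf{K}$ and on $\mathbf{L}$ extends to a morphism $\theta\circ\eta_i\to\theta\circ\eta_{i+1}$. This extension is the identity on every term coming from $\theta$ and is $f_1$ in the middle position. It is still a morphism of exact sequences, because the map $\mathbf{E}_2\to\mathbf{E}_1^{(i)}$ factors through $\mathbf{K}$, which is fixed. Hence $[\epsilon]=[\theta\circ\sigma]$, where $\sigma$ is the split sequence.

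\textbf{Step 3: the split composite is trivial.} In $\theta\circ\sigma$ the end of the sequence reads $\mathbf{E}_2\to\mathbf{K}\oplus\mathbf{L}\twoheadrightarrow\mathbf{L}$. I map it to the trivial sequence $\mathbf{N}=\mathbf{N}\to\mathbf{0}\to\cdots\to\mathbf{0}\to\mathbf{L}=\mathbf{L}$. To do this I take the projection $\mathbf{K}\oplus\mathbf{L}\to\mathbf{L}$ and send every other $\mathbf{E}_j$, as well as $\mathbf{E}_{n+1}$, to $\mathbf{0}$ or to $\mathbf{N}$.

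A map to zero blocks commutativity at the left end, since $\mathbf{N}\to\mathbf{E}_{n+1}$ must become $1_\mathbf{N}$. To fix this I use the backward direction instead: map the trivial-looking sequence $\mathbf{N}\to\mathbf{N}\oplus?\cdots$ into $\theta\circ\sigma$. The cleaner route is to show first that $\theta\circ\sigma\leftleadsto(\theta\circ\sigma_0)$, where $\sigma_0:\mathbf{0}\to\mathbf{L}=\mathbf{L}$ is degenerate. Here the inclusion $\mathbf{L}\to\mathbf{K}\oplus\mathbf{L}$ gives the comparison, which requires $\mathbf{E}_2\to\mathbf{0}$ to be replaced by $\mathbf{E}_2\to\mathbf{K}$ composed appropriately. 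Then the composite with a degenerate piece is equivalent to $0$, using the standard $0$ representative for $n+1\ge2$.

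\textbf{Main obstacle.} The main obstacle is exactly this last bookkeeping: checking exactness, that is $\ker=\opname{im}$, of the modified sequences. This is needed because the category is not additive and kernels of maps into direct sums must be computed by hand on pointed sets. I would verify each such step pointwise at vertices using the explicit description of $\mathbb{F}_1$-linear maps.
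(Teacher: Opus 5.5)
There is a genuine gap in Step 2, and it comes from cutting $\epsilon$ at the wrong end. The hypothesis $\Ext^n(\mathbf{L},-)=0$ only controls $n$-extensions that \emph{end at} $\mathbf{L}$. In your decomposition the only piece ending at $\mathbf{L}$ is the $1$-extension $\eta\in\mathbb{E}^1(\mathbf{L},\mathbf{K})$. So ``$[\eta]=0$'' is justified only when $n=1$. For $n\ge 2$ it would need $\Ext^n(\mathbf{L},-)=0\Rightarrow \Ext^1(\mathbf{L},-)=0$, and that is false. For $Q=\mathbb{A}_2$ with orientation $1\to 2$, every $\Ext^2$ vanishes by Theorem~\ref{thm:bipartite-case}, yet $\Ext^1(\mathbf{S}_1,\mathbf{S}_2)\neq 0$. ``Reducing to the $\Ext^1$ case by splicing inductively'' cannot repair this, because splicing never turns vanishing in degree $n$ into vanishing in degree $1$. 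The other piece $\theta$ ends at $\mathbf{K}$, about which the hypothesis says nothing.

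The fix is to cut at $\mathbf{C}:=\opname{im}(\mathbf{E}_{n+1}\to\mathbf{E}_n)$; for $n=1$ this is your $\mathbf{K}$.

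\textbf{The two pieces.} The sequence $\chi:\mathbf{C}\rightarrowtail\mathbf{E}_n\to\cdots\to\mathbf{E}_1\twoheadrightarrow\mathbf{L}$ lies in $\mathbb{E}^n(\mathbf{L},\mathbf{C})$, so $[\chi]=0$ by hypothesis. The remaining piece is the short exact sequence $\mathbf{N}\rightarrowtail\mathbf{E}_{n+1}\twoheadrightarrow\mathbf{C}$.

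\textbf{Transport.} Your transport argument works with the roles exchanged. A morphism $\chi_i\to\chi_{i+1}$ fixing $\mathbf{C}$ and $\mathbf{L}$ extends by the identity on $\mathbf{N}$ and $\mathbf{E}_{n+1}$. This is because $\mathbf{E}_{n+1}\to\mathbf{E}^{(i)}_n$ factors through the fixed $\mathbf{C}$. This is the standard splicing argument behind the cited references.

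\textbf{Step 3.} As written it is garbled: $\sigma_0:\mathbf{0}\to\mathbf{L}=\mathbf{L}$ is not an extension of $\mathbf{L}$ by $\mathbf{K}$, so ``$\theta\circ\sigma_0$'' is undefined. No detour is needed. For $n\ge 2$ the spliced sequence is
\[
\mathbf{N}\rightarrowtail\mathbf{E}_{n+1}\twoheadrightarrow\mathbf{C}\to\mathbf{0}\to\cdots\to\mathbf{0}\to\mathbf{L}=\mathbf{L},
\]
and for $n=1$ it is $\mathbf{N}\rightarrowtail\mathbf{E}_2\to\mathbf{C}\oplus\mathbf{L}\twoheadrightarrow\mathbf{L}$. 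The trivial $(n+1)$-extension maps directly into it via $1_{\mathbf{N}}$, the monomorphism $\mathbf{N}\rightarrowtail\mathbf{E}_{n+1}$, zero maps, and $1_{\mathbf{L}}$ (resp.\ the inclusion $\mathbf{L}\rightarrowtail\mathbf{C}\oplus\mathbf{L}$). The only nontrivial square commutes because $\mathbf{N}\to\mathbf{E}_{n+1}\to\mathbf{C}$ is zero. Both sequences are already exact, so the exactness bookkeeping you anticipate as the ``main obstacle'' never arises.
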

The following is a direct consequence of Lemma \ref{lem:equiv=0}.
\begin{corollary}
    If $\Ext^n(-,-)\neq 0$ and $\Ext^{n+1}(-,-)=0$ for $\opname{rep}(Q,\mathbb{F}_1)$, then $\opname{gldim} \opname{rep}(Q,\mathbb{F}_1)=n$.
\end{corollary}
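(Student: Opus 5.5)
The statement is the Corollary: if $\Ext^n(-,-)\neq 0$ and $\Ext^{n+1}(-,-)=0$, then $\opname{gldim}\opname{rep}(Q,\mathbb{F}_1)=n$. By the definition of global dimension as $\sup\{m \mid \Ext^m(-,-)\neq 0\}$, we need two things. First, $n$ belongs to the set, which is exactly the hypothesis $\Ext^n(-,-)\neq 0$. Second, no $m>n$ belongs to the set, i.e.\ $\Ext^m(\mathbf{L},\mathbf{N})=0$ for all $m\geq n+1$ and all objects $\mathbf{L},\mathbf{N}$.

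For the second point I will argue by induction on $m\geq n+1$, using Lemma~\ref{lem:equiv=0}.
\begin{itemize}
    \item The base case $m=n+1$ is the hypothesis $\Ext^{n+1}(-,-)=0$. Read pointwise, it says $\Ext^{n+1}(\mathbf{L},-)=0$ for every $\mathbf{L}\in\opname{rep}(Q,\mathbb{F}_1)$.
    \item For the inductive step, suppose $\Ext^m(\mathbf{L},-)=0$ for every $\mathbf{L}$, with $m\geq n+1\geq 1$. Lemma~\ref{lem:equiv=0} applies in degree $m$ and gives $\Ext^{m+1}(\mathbf{L},-)=0$ for every $\mathbf{L}$.
\end{itemize}
Hence $\Ext^m(-,-)=0$ for all $m\geq n+1$, so the supremum is attained at and equals $n$.

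There is no real obstacle. The only points needing care are bookkeeping: reading ``$\Ext^{n}(-,-)\neq 0$'' as ``nonzero for some pair of objects'' and ``$\Ext^{n+1}(-,-)=0$'' as ``zero for all pairs''. One should also note that Lemma~\ref{lem:equiv=0} needs degree $\geq 1$. This holds automatically since $m\geq n+1\geq 1$ as long as $n\geq 0$, and we assume $n\geq 1$ whenever $\Ext^n$ is defined via Yoneda extensions.
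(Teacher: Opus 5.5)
Your proposal is correct and matches the paper's intended argument: the paper simply notes that the corollary is a direct consequence of Lemma~\ref{lem:equiv=0}. Your induction on $m\geq n+1$, applying that lemma objectwise in the first variable, is exactly the iteration this one-line justification implicitly relies on.
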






\section{Upper bound of global dimensions}\label{s:upper-bound}

Let us first recall a well-known fact in  derived categories of representations of quivers. Let $Q$ be a finite quiver and $k$ a field. Denote by $\der^b(kQ)$ the derived category of right $kQ$-modules with suspension functor $\Sigma$.  For any morphism $L\xrightarrow{f}M$ of right $kQ$-modules, it is well known that $f$ fits into a triangle in $\mathcal{D}^b(kQ)$
    \[
    L\xrightarrow{f}M\to \coker f\oplus \Sigma \ker f\to \Sigma L.
    \]
    By the octahedral axiom, we obtain the following commutative diagram of triangles
    \[
    \xymatrix{L\ar@{=}[d]\ar[r]^\varphi & W\ar[d]^{\psi}\ar[r]^h&\coker f\ar[d]^{\tiny \begin{bmatrix}
        1\\0
    \end{bmatrix}}\ar[r] &\Sigma \ar@{=}[d]\\
    L\ar[r]^f & M\ar[r]&\coker f\oplus \Sigma \ker f\ar[r] &\Sigma L. }
    \]
    Since $L$ and $\coker f$ are $kQ$-modules, we conclude that $W$ is also a $kQ$-module and \[{L}\overset{\varphi}{\rightarrowtail}\ {W}\overset{h}{\twoheadrightarrow}{N}\] is a short exact sequence. In particular, we have a commutative diagram of $kQ$-modules:
    \[
    \xymatrix{L\ar@{=}[d]\ar[r]^{\varphi}&W\ar[d]^{\psi}\ar[r]^h &\coker f\ar@{=}[d]\\ L\ar[r]^f&M\ar[r]&\coker f.}
    \]

We show that a non-additive version of the above result holds  in $\opname{rep}(Q,\mathbb{F}_1)$.

\begin{lemma}\label{lem:pull-back-rep}
Let $Q$ be any quiver, and \[\mathbf{L}\xrightarrow{f}\mathbf{M}\overset{g}{\twoheadrightarrow} \mathbf{N}\]  a sequence of morphisms in $\opname{rep}(Q,\mathbb{F}_1)$ which is exact at $\mathbf{M}$ and $\mathbf{N}$. There exists  a short exact sequence \[\mathbf{L}\overset{\varphi}{\rightarrowtail}\ _f\mathbf{W}\overset{h}{\twoheadrightarrow}\mathbf{N}\] and a homomorphism $\psi:\ _f\mathbf{W}\to \mathbf{M}$ such that the following diagram commutes:
    \begin{equation}\label{diag:key-lemma}
    \xymatrix{\mathbf{L}\ \ar@{=}[d]\ar@{>->}[r]^{\varphi}&_f\mathbf{W}\ar[d]^{\psi}\ar@{->>}[r]^h&\mathbf{N}\ar@{=}[d]\\
    \mathbf{L}\ar[r]^f&\mathbf{M}\ar@{->>}[r]^g&\mathbf{N}.}
    \end{equation}
    Moreover, if $\mathbf{L},\mathbf{M}$ and $\mathbf{N}$ are nilpotent, then so is $ _f\mathbf{W}$. 
\end{lemma}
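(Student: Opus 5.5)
The plan is to construct ${}_f\mathbf{W}$ explicitly, vertex by vertex, as the wedge of $\mathbf{L}$ and $\mathbf{N}$, with arrows "twisted" by $f$. The construction rests on the rigidity of $\mathbb{F}_1$-linear maps, which replaces the octahedral axiom.

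\emph{Step 1: lifting along $g$.} Since $g$ is an epimorphism and $\mathbb{F}_1$-linear, for every vertex $i$ and every $0\neq n\in N_i$ there is a unique $\widetilde n\in M_i$ with $g_i(\widetilde n)=n$. Moreover $\widetilde n\notin\ker g_i=\opname{im} f_i$. Similarly, each nonzero $x\in \opname{im} f_i$ has a unique preimage $l\in L_i\setminus\ker f_i$.

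\emph{Step 2: definition of ${}_f\mathbf{W}$.} Put $W_i=L_i\sqcup(N_i\setminus\{0\})$, with $0_{L_i}$ as base point. For an arrow $\alpha:i\to j$ define $W_\alpha$ as follows.
\begin{itemize}
\item $W_\alpha(l)=L_\alpha(l)$ for $l\in L_i$.
\item For $0\neq n\in N_i$, we have three cases.
\begin{itemize}
\item If $N_\alpha(n)\neq 0$, set $W_\alpha(n)=N_\alpha(n)$.
\item If $N_\alpha(n)=0$, then $M_\alpha(\widetilde n)\in\ker g_j=\opname{im} f_j$. If it is nonzero, let $W_\alpha(n)$ be the unique $l\in L_j\setminus\ker f_j$ with $f_j(l)=M_\alpha(\widetilde n)$.
\item Otherwise set $W_\alpha(n)=0$.
\end{itemize}
\end{itemize}
Define $\varphi$ as the inclusion of $L_i$, and $h$ as the map killing $L_i$ and sending $n\mapsto n$. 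Define $\psi_i(l)=f_i(l)$ and $\psi_i(n)=\widetilde n$.

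\emph{Step 3: verifications.} Commutativity of $\varphi,h$ and $\psi$ with the arrows is a direct case check from the definition, using $g(M_\alpha\widetilde n)=N_\alpha(n)$. Exactness of $\mathbf{L}\rightarrowtail{}_f\mathbf{W}\twoheadrightarrow\mathbf{N}$ is immediate, as is commutativity of diagram \eqref{diag:key-lemma}. The map $\psi_i$ is $\mathbb{F}_1$-linear because of two facts: $f_i$ is injective off its kernel, and the lifts $\widetilde n$ are pairwise distinct and lie outside $\opname{im} f_i$.

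\emph{Step 4: $W_\alpha$ is $\mathbb{F}_1$-linear.} I expect this to be the main obstacle, namely ruling out collisions of nonzero images. The easy cases are as follows.
\begin{itemize}
\item Two elements of $L$ cannot collide, by linearity of $L_\alpha$.
\item Two elements $n,n'$ cannot collide on $N$, by linearity of $N_\alpha$.
\item Two elements $n\neq n'$ cannot collide on some $l$: then $M_\alpha(\widetilde n)=M_\alpha(\widetilde n')\neq0$ with $\widetilde n\neq\widetilde n'$, contradicting linearity of $M_\alpha$.
\end{itemize}
The remaining case is $L_\alpha(l')=l=W_\alpha(n)\neq 0$. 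Then
\[f(L_\alpha l')=M_\alpha(f(l'))=M_\alpha(\widetilde n).\]
If $f(l')=0$, this forces $f(l)=0$, contradicting the choice $l\notin\ker f_j$. If $f(l')\neq0$, then $f(l')\in\ker g$ and $\widetilde n\notin\ker g$ are distinct elements with the same nonzero image under $M_\alpha$, again a contradiction.

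Everything is defined vertexwise, so the construction works for arbitrary (infinite) quivers.

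\emph{Step 5: nilpotency.} I will use Lemma~\ref{lem:coefficient-indecomposable}(2). In $\Gamma_{{}_f\mathbf{W}}$, arrows starting in the $L$-part stay in the $L$-part, and these are exactly arrows of $\Gamma_{\mathbf{L}}$. Arrows between $N$-part vertices are arrows of $\Gamma_{\mathbf{N}}$. Hence any oriented cycle lies entirely in one part, giving a cycle in $\Gamma_{\mathbf{L}}$ or in $\Gamma_{\mathbf{N}}$. So if $\mathbf{L}$ and $\mathbf{N}$ are nilpotent, then ${}_f\mathbf{W}$ is nilpotent; nilpotency of $\mathbf{M}$ is not even needed.
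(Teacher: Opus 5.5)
Your proposal is correct and follows the paper's proof essentially step for step. It uses the same wedge $W_i=L_i\oplus N_i$ and the same three-case definition of $W_\alpha$; your condition $N_\alpha(n)\neq 0$ is equivalent to the paper's $g_j(M_\alpha(g_i^t(n)))\neq 0$ because $g_jM_\alpha\widetilde n=N_\alpha n$. The maps $\varphi,\psi$ are the same, your $h$ coincides with the paper's $h=g\circ\psi$, and the collision analysis for $\mathbb{F}_1$-linearity is the same. Your Step 5 supplies the nilpotency argument that the paper states but never writes out, and it is sound: no arrow of $\Gamma_{{}_f\mathbf{W}}$ runs from the $L$-part to the $N$-part, so any cycle would lie in $\Gamma_{\mathbf{L}}$ or $\Gamma_{\mathbf{N}}$.
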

\begin{proof}
Assume that $\mathbf{L}=(L_i,L_\alpha)_{i\in Q_0,\alpha\in Q_1}$, $\mathbf{M}=(M_i,M_\alpha)_{i\in Q_0,\alpha\in Q_1}$ and $\mathbf{N}=(N_i,N_\alpha)_{i\in Q_0,\alpha\in Q_1}$.

Let us first define the representation $_f\mathbf{W}:=(W_i,W_\alpha)_{i\in Q_0,\alpha\in Q_1}$. For each vertex $i\in Q_0$, let $W_i=L_i\oplus N_i$. For each arrow $\alpha:i\to j$, we define a map $W_\alpha:W_i\to W_j$ as  follows:
\begin{itemize}
    \item if $x\in L_i$, $W_\alpha(x)=L_\alpha(x)$;
    \item if $0\neq y\in N_i$,
    \[
    W_\alpha(y)=\begin{cases}
        0 &\text{if $M_\alpha(g_i^t(y))=0$};\\
        N_\alpha(y)&\text{if $M_\alpha(g_i^t(y))\neq 0$ and $g_j(M_\alpha(g_i^t(y)))\neq 0$};\\
        f_j^t(M_\alpha(g_i^t(y)))&\text{if $M_\alpha(g_i^t(y))\neq 0$ and $g_j(M_\alpha(g_i^t(y)))=0$}. 
    \end{cases}
    \]
\end{itemize}
The following commutative diagram of $\mathbb{F}_1$-linear maps is helpful to understand the definition of $W_\alpha$:
\begin{equation}\label{diag:f-g-morphism}
\xymatrix{\llap{$x\in$ }L_i\ar[d]^{L_\alpha}\ar[r]^{f_i}&M_i\ar[d]^{M_\alpha}\ar@{->>}[r]^{g_i}&N_i\ar[d]^{N_\alpha}\rlap{$\ni y$}\\
L_j\ar[r]^{f_j}&M_j\ar@{->>}[r]^{g_j}&N_j.}
\end{equation}
We claim that $W_\alpha$ is $\mathbb{F}_1$-linear, hence completing the proof that \[_f\mathbf{W}=(W_i,W_\alpha)_{i\in Q_0,\alpha\in Q_1}\]is a representation. By definition, it remains to show that $W_{\alpha}(u)=W_\alpha(v)\neq 0$ implies that $u=v\in W_i$.
\begin{itemize}
    \item[(1)] If both $u,v\in L_i$, then $W_\alpha(u)=L_\alpha(u)=L_\alpha(v)=W_\alpha(v)$. It follows that $u=v$ since $L_\alpha$ is $\mathbb{F}_1$-linear.
    \item[(2)] Assume that $u,v\in N_i$. Since $W_\alpha(u)=W_\alpha(v)\neq 0$, we deduce that $M_\alpha(g_i^t(u))\neq 0\neq M_\alpha(g_i^t(v))$. If follows that $N_\alpha(u)=N_{\alpha}(v)\in N_j$ or $f_j^t(M_\alpha(g_i^t(u)))=f_j^t(M_\alpha(g_i^t(v)))\in L_j$. In either case, we conclude that $u=v$.
    \item[(3)] Finally, assume that $u\in L_i$ and $v\in N_i$. By definition, $W_\alpha(u)=L_\alpha(u)=W_\alpha(v)\neq 0$. It follows that $M_\alpha(g_i^t(v))\neq 0$, $g_j(M_\alpha(g_i^t(v)))=0$ and \[W_\alpha(v)=f_j^t(M_\alpha(g_i^t(v)))=L_\alpha(u).\]
    Applying $f_j$, we obtain $0\neq M_\alpha(g_i^t(v))=f_j(L_\alpha(u))=M_\alpha(f_i(u))$. Consequently, $g_i^t(v)=f_i(u)$ since $M_\alpha$ is $\mathbb{F}_1$-linear. By further applying $g_i$, we obtain \[0\neq v=g_if_i(u)=0,\] a contradiction.
\end{itemize}
This completes the verification of $\mathbb{F}_1$-linearity of $W_\alpha$.

Now we turn to define the homomorphism $\varphi=(\varphi_i)_{i\in Q_0}:\mathbf{L}\rightarrowtail\ _f\mathbf{W}$ and $\psi=(\psi_i)_{i\in Q_0}:\  _f\mathbf{W}\to \mathbf{M}$. 

For each $i\in Q_0$, define $\varphi_i:L_i\rightarrowtail W_i=L_i\oplus N_i$ by $x\mapsto x$. Clearly, $\varphi_i$ is $\mathbb{F}_1$-linear. For any $\alpha:i\to j\in Q_1$, \[W_\alpha(\varphi_i(x))=W_\alpha(x)=L_\alpha(x)=\varphi_j(L_\alpha(x)).\] In particular, the following diagram is commutative:
\[
\xymatrix{L_i\ar[d]^{\varphi_i}\ar[r]^{L_\alpha}&L_j\ar[d]^{\varphi_j}\\ W_i\ar[r]^{W_\alpha}&W_j.}
\]
It follows that $\varphi=(\varphi_i)_{i\in Q_0}:\mathbf{L}\rightarrowtail\ _f\mathbf{W}$ is an injective homomorphism.

For each  $i\in Q_0$, define $\psi_i:W_i=L_i\oplus N_i\to M_i$ as follows
\begin{itemize}
    \item for $x\in L_i$, we define $\psi_i(x):=f_i(x)$;
    \item for $0\neq y\in N_i$, we define $\psi_i(y)=g_i^t(y)$.
\end{itemize}
It is straightforward to check that $\psi_i$ is $\mathbb{F}_1$-linear. Let us consider the following diagram
\begin{equation}\label{diag:psi}
\xymatrix{L_i\oplus N_i=W_i\ar[d]^{\psi_i}\ar[rr]^{W_\alpha}&&W_j\ar[d]^{\psi_j}\\ M_i\ar[rr]^{M_\alpha}&&M_j.}
\end{equation}
For $x\in L_i$, we have \[\psi_j(W_\alpha(x))=\psi_j(L_\alpha(x))=f_j(L_\alpha(x))=M_\alpha(f_i(x))=M_\alpha(\psi_i(x)),\] where the third equality follows from the commutative diagram \eqref{diag:f-g-morphism}.
For $y\in N_i$,
\begin{itemize}
    \item if $M_\alpha(g_i^t(y))=0$, then $W_\alpha(y)=0$. That is, $\psi_j(W_\alpha(y))=0=M_\alpha(\psi_i(y))$.
    \item if $M_\alpha(g_i^t(y))\neq 0$ and $g_j(M_\alpha(g_i^t(y)))\neq 0$, then $N_\alpha(y)\neq 0$ and
    \[
    \psi_j(W_\alpha(y))=\psi_j(N_\alpha(y))=g_j^t(N_\alpha(y))=M_\alpha(g_i^t(y))=M_\alpha(\psi_i(y)),
    \]
    where the third equality follows from the commutative diagram \eqref{diag:f-g-morphism}.
    \item if  $M_\alpha(g_i^t(y))\neq 0$ and $g_j(M_\alpha(g_i^t(y)))= 0$, then
    \[
    \psi_j(W_\alpha(y))=\psi_j(f_j^t(M_\alpha(g_i^t(y))))=f_j(f_j^t(M_\alpha(g_i^t(y))))=M_\alpha(g_i^t(y))=M_\alpha(\psi_i(y)).
    \]
\end{itemize}
In other words, we have verified that the diagram \eqref{diag:psi} is commutative. Hence $\psi: _f\mathbf{W}\to \mathbf{M}$ is a homomorphism. 

Let $h:=g\circ \psi$. It is straightforward to see that $h$ is surjective and $\mathbf{L}\overset{\varphi}{\rightarrowtail}\ _f\mathbf{W}\overset{h}{\twoheadrightarrow}\mathbf{N}$ is a short exact sequence. Furthermore, $f=\psi\circ \varphi$. This completes the proof.

\end{proof}
\begin{remark}
    The representation $_f\mathbf{W}$ in Lemma \ref{lem:pull-back-rep} is unique up to isomorphism. Indeed, since $\mathbf{L}\overset{\varphi}{\rightarrowtail}\ _f\mathbf{W}\overset{h}{\twoheadrightarrow}\mathbf{N}$ is a short exact sequence, we may assume $W_i=L_i\oplus N_i$ for each vertex $i\in Q_0$, $\varphi_i:L_i\rightarrowtail L_i\oplus N_i$ is the canonical inclusion and $h_i:L_i\oplus N_i\twoheadrightarrow N_i$ is the canonical projection.  By analyzing the commutative diagram \eqref{diag:key-lemma},  $W_\alpha$ can only be defined as in the proof.
\end{remark}


Now we are in a position to prove Theorem \ref{thm:main-thm-1}.
\begin{proof}[Proof of Theorem \ref{thm:main-thm-1}]
According to Lemma \ref{lem:equiv=0}, it suffices to show that $\operatorname{Ext}^3(\mathbf{N},\mathbf{L})=0$ for any $\mathbf{L},\mathbf{N}\in \opname{rep}(Q,\mathbb{F}_1)$ or $\opname{rep}(Q,\mathbb{F}_1)_{\rm nil}$.

Let $\epsilon: \mathbf{L}\rightarrowtail \mathbf{U}\xrightarrow{}\mathbf{V}\xrightarrow{f}\mathbf{M}\overset{g}{\twoheadrightarrow}\mathbf{N}$ an exact sequence in $\Ext^3(\mathbf{N},\mathbf{L})$.
Applying Lemma \ref{lem:pull-back-rep}, we obtain the following commutative diagram of exact sequences:
\[
\xymatrix{\mathbf{L}\,\ar@{>->}[r]&\mathbf{U}\ar[r]&\mathbf{V}\ar[r]^f&\mathbf{M}\ar@{->>}[r]^g&\mathbf{N}\\ \mathbf{L}\ar@{=}[d]\ar@{=}[u]\ar@{=}[r]&\mathbf{L}\, \ar@{>->}[u]\ar[r]^0\ar@{=}[d]&\mathbf{V}\ar[d]\ar@{>->}[r]^\varphi\ar@{=}[u]&\mathbf{W}\ar[d]^{g\circ\psi}\ar@{->>}[r]^{g\circ\psi}\ar[u]^{\psi}&\mathbf{N}\ar@{=}[d]\ar@{=}[u]\\
\mathbf{L}\ar@{=}[r]&\mathbf{L}\ar[r]&0\ar[r] &\mathbf{N}\ar@{=}[r]&\mathbf{N}.
}
\]
It follows that $[\epsilon]=[0]$. 

If the exact sequence $\epsilon$ belongs to $\opname{rep}(Q,\mathbb{F}_1)_{\rm nil}$, then all the exact sequences in the above diagram belong to $\opname{rep}(Q,\mathbb{F}_1)_{\rm nil}$. Hence $\Ext^3(\mathbf{N},\mathbf{L})=0$ for $\opname{rep}(Q,\mathbb{F}_1)_{\rm nil}$.
\end{proof}

\section{Classification up to global dimension}\label{s:classification}
\subsection{Embedding/retriction functors}
Let $Q'$ be a subquiver (may not full) of $Q$. 
There are two natural functors associated with $\opname{rep}(Q',\mathbb{F})$ and $\opname{rep}(Q,\mathbb{F})$:
\begin{itemize}
    \item[(1)] The embedding functor \begin{eqnarray*}\iota:&\opname{rep}(Q',\mathbb{F})&\to \opname{rep}(Q,\mathbb{F}),\\
    &\mathbf{V}=(V_i,V_\alpha)&\mapsto \iota(\mathbf{V}):=(W_i,W_\alpha)
    \end{eqnarray*}
    where \[W_i=\begin{cases}
        V_i& \text{if $i\in Q_0'$};\\
        0&\text{otherwise},
    \end{cases}\ \text{and}\  W_\alpha=\begin{cases}
        V_\alpha&\text{if $\alpha\in Q_1'$};\\ 0&\text{otherwise}.
    \end{cases}\]
    For a homomorphism $f=(f_i)_{i\in Q_0'}:\mathbf{V}\to \mathbf{U}$, the homomorphism $\iota(f):=(g_i)_{i\in Q_0}:\iota(\mathbf{V})\to \iota(\mathbf{U})$ is defined as \[g_i=\begin{cases}
        f_i&\text{if $i\in Q_0'$};\\
        0&\text{else}.
    \end{cases}\]
    \item[(2)] The restriction functor
    \begin{eqnarray*}
        \text{Res}: &\opname{rep}(Q,\mathbb{F})&\to \opname{rep}(Q',\mathbb{F}),\\
        &\mathbf{W}=(W_i,W_\alpha)&\mapsto \text{Res}(\mathbf{W}):=(W_i,W_\alpha)_{i\in Q_0',\alpha\in Q'_1}
    \end{eqnarray*}
    For a homomorphism $f=(f_i)_{i\in Q_0}:\mathbf{W}\to \mathbf{W'}$, the homomorphism $\text{Res}(f):=(f_i)_{i\in Q_0'}:\text{Res}(\mathbf{W})\to \text{Res}(\mathbf{W'})$.
\end{itemize}
By definition, both $\iota$ and $\text{Res}$ are exact, i.e., preserve exact sequences, and $\text{Res}\circ \iota=\id$. As a consequence, for any $1\leq i\leq 2$, we obtain  two well-defined  maps:
\[
\iota^i:\mathbb{E}^i(\mathbf{N},\mathbf{L})\to \mathbb{E}^i(\iota(\mathbf{N}),\iota(\mathbf{L})),
\]
\[
\text{Res}^i:\mathbb{E}^i(\mathbf{U},\mathbf{W})\to \mathbb{E}^i(\text{Res}(\mathbf{U}),\text{Res}(\mathbf{W})).
\]
Clearly, for any $\epsilon \in \mathbb{E}^i(\mathbf{N},\mathbf{L})$, $\text{Res}^i\circ \iota^i(\epsilon)=\epsilon$. If $\epsilon,\epsilon'\in \mathbb{E}^i(\mathbf{N},\mathbf{L})$ such that $[\epsilon]=[\epsilon']$, we clearly have $[\iota^i(\epsilon)]=[\iota^i(\epsilon')]$. As a consequence, $\iota^i$ further induces a map
\[
\iota^i:\Ext_{Q'}^i(\mathbf{N},\mathbf{L})\to \Ext_Q^i(\iota(\mathbf{N}),\iota(\mathbf{L})).
\]
Similarly, 

The map $\text{Res}^i$ also induces a map
\[
\text{Res}^i:\Ext_Q^i(\mathbf{U},\mathbf{W})\to \Ext_{Q'}^i(\text{Res}(\mathbf{U}),\text{Res}(\mathbf{W})).
\]
\begin{lemma}\label{lem:embedding-ext}
    The map $\iota^i$ is an injective $\mathbb{F}_1$-linear map.
\end{lemma}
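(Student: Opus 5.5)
The plan is to prove $\mathbb{F}_1$-linearity first and then injectivity, using $\text{Res}$ as a one-sided inverse.

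For $\mathbb{F}_1$-linearity, a map of pointed sets is $\mathbb{F}_1$-linear when it sends the base point to the base point and is injective away from the preimage of $0$. So injectivity of $\iota^i$ already covers the second condition, and it remains only to check that $\iota^i$ is pointed. The trivial element of $\Ext^i_{Q'}(\mathbf{N},\mathbf{L})$ is represented by the split sequence $\mathbf{L}\rightarrowtail\mathbf{L}\oplus\mathbf{N}\twoheadrightarrow\mathbf{N}$ when $i=1$, and by $\mathbf{L}=\mathbf{L}\to\mathbf{0}\to\cdots\to\mathbf{N}=\mathbf{N}$ when $i\geq 2$. The functor $\iota$ is defined vertexwise and arrowwise, and it commutes with $\oplus$, with zero objects, with identities and with the canonical inclusions and projections. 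Hence $\iota^i$ sends these representatives to the corresponding trivial representatives in $\mathbb{E}^i(\iota(\mathbf{N}),\iota(\mathbf{L}))$, so $\iota^i(0)=0$.

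For injectivity, the idea is to show that $\text{Res}^i$ is a left inverse of $\iota^i$ on equivalence classes. Since $\text{Res}$ is exact, it sends any commutative diagram witnessing $\epsilon\leadsto\epsilon'$ in $\mathbb{E}^i(\mathbf{U},\mathbf{W})$ to a commutative diagram witnessing $\text{Res}^i(\epsilon)\leadsto\text{Res}^i(\epsilon')$. The identity maps on the ends stay identities, because $\text{Res}$ preserves identities. It follows that $\text{Res}^i$ respects the equivalence relation generated by $\leadsto$, which is why it descends to $\Ext$. Now suppose $[\iota^i(\epsilon)]=[\iota^i(\epsilon')]$ for $\epsilon,\epsilon'\in\mathbb{E}^i(\mathbf{N},\mathbf{L})$. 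This equality is witnessed by a zigzag $\iota^i(\epsilon)=\eta_0\leadsto\eta_1\leftleadsto\cdots\leftleadsto\eta_k=\iota^i(\epsilon')$. The intermediate $\eta_j$ are extensions in $\opname{rep}(Q,\mathbb{F}_1)$ of $\iota(\mathbf{N})$ by $\iota(\mathbf{L})$, and they need not lie in the image of $\iota$. Applying $\text{Res}^i$ termwise gives a zigzag in $\mathbb{E}^i(\text{Res}\,\iota(\mathbf{N}),\text{Res}\,\iota(\mathbf{L}))=\mathbb{E}^i(\mathbf{N},\mathbf{L})$ joining $\text{Res}^i\iota^i(\epsilon)=\epsilon$ to $\epsilon'$. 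Therefore $[\epsilon]=[\epsilon']$, and $\iota^i$ is injective.

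The one point needing care is that $\text{Res}\circ\iota=\id$ holds on the nose, on objects as well as on morphisms. Only then does the restricted zigzag literally start at $\epsilon$, end at $\epsilon'$, and have identity end maps $\mathbf{L}=\mathbf{L}$ and $\mathbf{N}=\mathbf{N}$. This is immediate from the definitions: $\iota$ and $\text{Res}$ leave the data at vertices and arrows of $Q'$ unchanged. Exactness of $\text{Res}$ is also immediate, since kernels and images are computed vertexwise, and this guarantees that each restricted $\text{Res}^i(\eta_j)$ is again an exact sequence.
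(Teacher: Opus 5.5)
Your proposal is correct and follows essentially the same route as the paper. You apply $\text{Res}^i$ termwise to a zigzag connecting $\iota^i(\epsilon)$ and $\iota^i(\epsilon')$, and use $\text{Res}\circ\iota=\id$ to obtain a zigzag in $\mathbb{E}^i(\mathbf{N},\mathbf{L})$ from $\epsilon$ to $\epsilon'$. Your explicit check that $\iota^i$ sends the trivial class to the trivial class, which the paper leaves implicit, is also handled correctly.
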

\begin{proof}
    Let $\epsilon,\epsilon'\in \mathbb{E}^i(\mathbf{N},\mathbf{L})$ for $\mathbf{L},\mathbf{N}\in \opname{rep}(Q',\mathbb{F}_1)$. It suffices to show that $[\epsilon]=[\epsilon']\in \Ext^i_{Q'}(\mathbf{N},\mathbf{L})$ if and only if $[\iota^i(\epsilon)]=[\iota^i(\epsilon')]\in \Ext^i_Q(\iota(\mathbf{N}),\iota(\mathbf{L}))$. The direction ``$\Rightarrow$" is clear since $\iota^i:\Ext_{Q'}^i(\mathbf{N},\mathbf{L})\to \Ext_Q^i(\iota(\mathbf{N}),\iota(\mathbf{L}))$ is a map.

    Assume that $[\iota^i(\epsilon)]=[\iota^i(\epsilon')]$. Without loss of generality, we may assume that there exist exact sequences $\epsilon_1,\ldots, \epsilon_m\in \mathbb{E}^i(\iota(\mathbf{N}),\iota(\mathbf{L}))$ such that
    \[
    \iota^i(\epsilon)\leadsto \epsilon_1\leftleadsto \epsilon_2\leadsto\cdots\leadsto\epsilon_m\leftleadsto \iota^i(\epsilon').
    \]
    By applying $\text{Res}^i$, we obtain
    \[
    \epsilon=\text{Res}^i\iota^i(\epsilon)\leadsto \text{Res}^i(\epsilon_1)\leftleadsto \text{Res}^i(\epsilon_2)\leadsto\cdots\leadsto\text{Res}^i(\epsilon_m)\leftleadsto \text{Res}^i(\iota^i(\epsilon'))=\epsilon'.
    \]
    Note that each $\text{Res}^i(\epsilon_j)\in \mathbb{E}^i(\mathbf{N},\mathbf{L})$ for $1\leq j\leq m$. We conclude that $[\epsilon]=[\epsilon']\in\Ext^i_{Q'}(\mathbf{N},\mathbf{L})$.
\end{proof}
\begin{remark}
    The map $\text{ Res}^i:\Ext_Q^i(\mathbf{U},\mathbf{W})\to \Ext_{Q'}^i(\text{Res}(\mathbf{U}),\text{Res}(\mathbf{W}))$ may not be $\mathbb{F}_1$-linear in general. On the other hand, we have $\text{Res}^i\circ \iota^i=\id: \Ext_{Q'}^i(\mathbf{N},\mathbf{L})\to \Ext_{Q'}^i(\mathbf{N},\mathbf{L})$.
\end{remark}
\begin{proposition}\label{prop:subquiver-a3-case}
    Let $Q$ be any quiver. If $Q$ has a subquiver $\xymatrix{\cdot\ar[r]&\cdot\ar[r]&\cdot}$, then $\opname{gldim}\opname{rep}(Q,\mathbb{F}_1)=2$.
\end{proposition}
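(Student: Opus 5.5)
By Theorem \ref{thm:main-thm-1} we already have $\opname{gldim}\opname{rep}(Q,\mathbb{F}_1)\leq 2$, so it suffices to exhibit $\mathbf{L},\mathbf{N}$ with $\Ext^2_Q(\mathbf{N},\mathbf{L})\neq 0$. We reduce to the linear quiver $Q'$ of type $\mathbb{A}_3$, namely $1\xrightarrow{a}2\xrightarrow{b}3$, viewed as a (not necessarily full) subquiver of $Q$. By Lemma \ref{lem:embedding-ext}, the map $\iota^2:\Ext^2_{Q'}(\mathbf{N},\mathbf{L})\to\Ext^2_Q(\iota(\mathbf{N}),\iota(\mathbf{L}))$ is injective and $\mathbb{F}_1$-linear, so it sends the base point to the base point. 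Hence a nonzero class for $Q'$ yields a nonzero class for $Q$. The problem is therefore reduced to showing $\Ext^2_{Q'}\neq 0$ for $\mathbb{A}_3$ with linear orientation.

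This is the phenomenon recorded in \cite{FRY24}, namely that linear $\mathbb{A}_n$ with $n\geq 3$ has global dimension $2$. To keep the argument self-contained, I would use the explicit candidate
\[
S_3\rightarrowtail P_2\to P_1\twoheadrightarrow S_1 .
\]
Here $P_1$ is the indecomposable with support $\{1,2,3\}$ and the maps one-dimensional bijections, $P_2$ has support $\{2,3\}$, and $S_i$ are the simples. The middle map $P_2\to P_1$ is the inclusion, with image the subrepresentation supported at $\{2,3\}$; this is exactly the kernel of $P_1\twoheadrightarrow S_1$. Its kernel is zero, but we need the kernel of $P_2\to P_1$ to equal the image of $S_3$. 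So I would instead take
\[
S_3\rightarrowtail P_2\to M\twoheadrightarrow S_1,
\]
with $M$ the representation supported at $\{1,2\}$ ($1\to 2$ bijective). Then $P_2\to M$ has image $S_2$ and kernel $S_3$, and $M\to S_1$ has kernel $S_2$. This sequence is exact.

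The main obstacle is proving that this class is nonzero, since there is no additive long exact sequence available. I would look for an invariant of $2$-extensions that is preserved under $\leadsto$ in both directions and distinguishes $\epsilon$ from the trivial sequence $S_3=S_3\to 0\to S_1=S_1$. The natural invariant is connectivity across the middle: in any $2$-extension $S_3\rightarrowtail E_2\to E_1\twoheadrightarrow S_1$, consider whether $E_1$ has a nonzero element $x$ at vertex $1$ lying over the generator of $S_1$ with $E_{1,a}(x)\neq 0$. Such an element must lie in the image of $E_2$, and then whether its preimage $y\in E_2$ at vertex $2$ satisfies $E_{2,b}(y)\neq 0$; since $E_2\to E_1$ kills $S_3$, this forces $E_{2,b}(y)$ to be the generator of $S_3$. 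Call the sequence \emph{linked} if such a chain exists.

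I would then check that a morphism of $2$-extensions $\epsilon\leadsto\epsilon'$ (identities on the ends) preserves linkedness in both directions. Forward, the images of the chain elements stay nonzero because the composites to $S_1$ and from $S_3$ are fixed. Backward, one pulls elements back using that $E_1\to S_1$ and $S_3\to E_2$ are fixed and that the maps are $\mathbb{F}_1$-linear. The backward direction is the delicate case check. The sequence $\epsilon$ above is linked and the zero sequence is not, so $[\epsilon]\neq 0$, and the proposition follows. If the backward preservation fails in general, I would fall back on citing \cite{FRY24} for $\Ext^2_{\mathbb{A}_3}\neq 0$ and only use Lemma \ref{lem:embedding-ext} for the transfer.
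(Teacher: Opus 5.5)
Your reduction is exactly the paper's proof. The paper takes the linear $\mathbb{A}_3$ subquiver $Q'$, quotes \cite{FRY24} for $\Ext^2_{Q'}(-,-)\neq 0$, transfers this to $Q$ by Lemma~\ref{lem:embedding-ext}, and concludes with Theorem~\ref{thm:main-thm-1}. Your fallback therefore \emph{is} the published argument.

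The self-contained replacement, however, contains a false step. Write the extension as $\mathbf{S}_3\overset{\kappa}{\rightarrowtail}\mathbf{E}_2\xrightarrow{d}\mathbf{E}_1\overset{p}{\twoheadrightarrow}\mathbf{S}_1$, with generators $s\in(\mathbf{S}_3)_3$ and $t\in(\mathbf{S}_1)_1$. Your claim that $E_{2,b}(y)\neq 0$ ``forces'' $E_{2,b}(y)=\kappa(s)$ is unjustified: you only know $d(E_{2,b}(y))=E_{1,b}E_{1,a}(x)$, and this need not vanish. For instance,
\[
\mathbf{S}_3\rightarrowtail\mathbf{S}_3\oplus\mathbf{P}_2\xrightarrow{(0,\ \mathrm{incl})}\mathbf{P}_1\twoheadrightarrow\mathbf{S}_1
\]
is exact and linked in your sense. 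Here $y$ is the vertex-$2$ element of $\mathbf{P}_2$, and $E_{2,b}(y)$ is the vertex-$3$ element of $\mathbf{P}_2$, which is nonzero but not $\kappa(s)$. Yet the projections $\mathbf{S}_3\oplus\mathbf{P}_2\to\mathbf{S}_3$ and $\mathbf{P}_1\to\mathbf{S}_1$ give a commutative diagram showing that this extension $\leadsto$ the zero extension $\mathbf{S}_3=\mathbf{S}_3\xrightarrow{0}\mathbf{S}_1=\mathbf{S}_1$, which is not linked. So linkedness as you stated it is not invariant even in the forward direction. Your forward argument only controls elements that come from $\mathbf{S}_3$ or map nontrivially to $\mathbf{S}_1$, and this $E_{2,b}(y)$ is neither.

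The repair is to define linked as: $E_{1,a}(x)\neq 0$ and $E_{2,b}(y)=\kappa(s)$, equivalently $E_{2,b}(y)\neq 0$ and $E_{1,b}E_{1,a}(x)=0$. Here $x$ and $y$ are unique because $p$ and $d$ are $\mathbb{F}_1$-linear, so no existential quantifier is needed. Let $\epsilon\leadsto\epsilon'$ via $(f_2,f_1)$. Uniqueness gives $f_1(x)=x'$, hence $E'_{1,a}(x')=f_1E_{1,a}(x)$.
\begin{itemize}
\item \textbf{Forward.} We have $E'_{2,b}(f_2(y))=f_2\kappa(s)=\kappa'(s)\neq0$, so $f_2(y)\neq 0$. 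Since $\ker d'=\im\kappa'$ is concentrated at vertex $3$, $E'_{1,a}(x')=d'f_2(y)\neq 0$. Hence $y'=f_2(y)$ and $E'_{2,b}(y')=\kappa'(s)$.
\item \textbf{Backward.} From $f_1E_{1,a}(x)=E'_{1,a}(x')\neq 0$ we get $E_{1,a}(x)=d(y)\neq 0$. Then $d'f_2(y)=f_1d(y)=d'(y')\neq 0$ forces $f_2(y)=y'$, by $\mathbb{F}_1$-linearity of $d'$. Finally $f_2(E_{2,b}(y))=E'_{2,b}(y')=\kappa'(s)=f_2(\kappa(s))\neq 0$ forces $E_{2,b}(y)=\kappa(s)$, by $\mathbb{F}_1$-linearity of $f_2$.
\end{itemize}
Your sequence $\mathbf{S}_3\rightarrowtail\mathbf{P}_2\to\mathbf{M}\twoheadrightarrow\mathbf{S}_1$ satisfies the repaired condition, while the zero extension has $E_{1,a}(x)=0$. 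With this correction you get a self-contained proof of the input the paper imports from \cite{FRY24}, and the rest of your argument goes through as written.
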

\begin{proof}
    Let $Q'$ be the subquiver $\xymatrix{\cdot\ar[r]&\cdot\ar[r]&\cdot}$ of $Q$. By \cite{FRY24}*{Theorem 3.10}, $\Ext^2_{Q'}(-,-)\neq 0$. It follows that $\Ext^2_Q(-,-)\neq 0$ by Lemma \ref{lem:embedding-ext}. According to Theorem \ref{thm:main-thm-1}, $\text{gldim} \opname{rep}(Q,\mathbb{F}_1)=2$.
\end{proof}
\subsection{Quivers with oriented cycles}
In this subsection, let $Q$ be a quiver  with an oriented cycle $Q'$. We label the vertex set of $Q'$ by $Q_0'=\{1,\dots, n\}$  such that the arrows are precisely $\{\alpha_i:i\to i+1\}$(taken modulo $n$). Let $\mathbf{S}_i$ be the nilpotent simple representation associated with vertex $i$. Denote by $\Ext^2_{Q'}(-,-)_{\rm nil}$ (resp. $\Ext^2_{Q}(-,-)_{\rm nil}$)  the second Yoneda extension space in the category $\opname{rep}(Q,\mathbb{F}_1)_{\rm nil}$ (resp. $\opname{rep}(Q,\mathbb{F}_1)_{\rm nil}$).
The following has been established in \cite{FYZ25}.
\begin{lemma}\label{lem:2-ext-infinite}
    For any $i,j\in Q_0'$, we have $\Ext^2_{Q'}(\mathbf{S}_i,\mathbf{S}_j)_{\rm nil}\cong \mathbb{N}$, where $\mathbb{N}$ is the set of nonnegative integers as a point set with base point $0$.
\end{lemma}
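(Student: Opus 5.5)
The plan is to show that every class in $\Ext^2_{Q'}(\mathbf{S}_i,\mathbf{S}_j)_{\rm nil}$ is determined by one nonnegative integer, the length of a certain path in $Q'$ from $i$ to $j$. I would then show that every value of this integer is attained by a canonical $2$-extension and that distinct values give inequivalent classes.

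The first step is a description of the nilpotent indecomposables of the cyclic quiver $Q'$. By Lemma \ref{lem:coefficient-indecomposable}, the coefficient quiver of such a representation is connected and acyclic. Since every vertex of $Q'$ has exactly one outgoing and one incoming arrow, this coefficient quiver must be a linearly oriented path. Hence each nilpotent indecomposable is a uniserial string module $\mathbf{M}(a,\ell)$ with top at vertex $a$ and length $\ell$, and its socle sits at vertex $a+\ell-1$ modulo $n$.

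Next comes a reduction to normal form. Take a $2$-extension
\[
\epsilon:\ \mathbf{S}_j\rightarrowtail \mathbf{E}\xrightarrow{u}\mathbf{F}\twoheadrightarrow \mathbf{S}_i .
\]
The top element of $\mathbf{S}_i$ has a unique preimage $y\in F_i$. Let $\mathbf{F}'$ be the string summand of $\mathbf{F}$ containing $y$. Dually, let $\mathbf{E}'$ be the string summand of $\mathbf{E}$ containing the image of the basis element of $\mathbf{S}_j$.

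I would compare $\epsilon$, via the relation $\leadsto$, with the sequence obtained by discarding all other summands. Splitting off a summand is a morphism of extensions, in one direction or the other, once one checks that exactness survives, possibly after replacing the middle map by its restriction. The outcome should be that $\epsilon$ is equivalent either to $0$ or to a \emph{standard} extension
\[
\epsilon_m:\ \mathbf{S}_j\rightarrowtail \mathbf{M}(a,p)\to \mathbf{M}(i,q)\twoheadrightarrow \mathbf{S}_i ,
\]
in which $\mathbf{M}(a,p)/\mathbf{S}_j\cong \opname{rad}\mathbf{M}(i,q)$. Concatenating $\mathbf{M}(i,q)$ with $\mathbf{S}_j$ then traces a path in $Q'$ from $i$ to $j$ of length $m=q$, and $m\geq 1$ with $m\equiv j-i+1 \pmod n$.

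The next step is to define the invariant on all of $\mathbb{E}^2$. Starting from the top element $y$ of $\mathbf{F}$ over $\mathbf{S}_i$, I follow the coefficient quiver of $\mathbf{F}$ down to an element of $\opname{im} u$. I pull it back uniquely into $\mathbf{E}$, using the $\mathbb{F}_1$-linearity of $u$ off its kernel, and then continue in $\mathbf{E}$ until reaching $\mathbf{S}_j$. If this zig-zag path exists, the invariant is its total length; otherwise it is $0$.

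If $\epsilon\leadsto\epsilon'$, then the components $f_1,f_2$ are $\mathbb{F}_1$-linear and commute with the arrows. So they carry the zig-zag path of $\epsilon$ onto a zig-zag path of $\epsilon'$, because the end elements are fixed and nonzero. Hence the invariant is constant on equivalence classes. Surjectivity onto $\mathbb{N}$ will be obtained by exhibiting $\epsilon_m$ explicitly; I also need to check that the numbering can be shifted so that the attained lengths biject with $\mathbb{N}$, with $0$ corresponding to the split class. Injectivity then follows from the normal form step, since two standard extensions with the same $m$ are isomorphic.

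The main obstacle is the well-definedness claim in the degenerate cases. I must show that an element on the zig-zag path cannot be sent to $0$ by $f_1$ or $f_2$ while the end points stay fixed. The plan is to argue with exactness: a zero at some step would force the element of $\mathbf{S}_i$, or of $\mathbf{S}_j$, into a kernel that exactness forbids. I would also verify carefully in the normal form step that removing summands never leaves the class of extensions.
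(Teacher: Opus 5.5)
The paper does not prove this lemma; it quotes it from \cite{FYZ25}. So there is no in-paper argument to compare with, and your proposal has to stand on its own. Your architecture is the right one: nilpotent indecomposables are strings, a zig-zag invariant, and a normal form. However, two steps fail as written.

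\textbf{Invariance.} Let $y\in F_i$ be the top over $\mathbf{S}_i$ and $z\in E_j$ the image of the basis vector of $\mathbf{S}_j$. For $\epsilon\leadsto\tilde\epsilon$ with components $f_1,f_2$, write $\tilde y,\tilde z,\tilde u,\tilde F_{\alpha_i}$ for the corresponding data of $\tilde\epsilon$.

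Pushing a zig-zag forward only shows that a \emph{nonzero} invariant of $\epsilon$ is inherited by $\tilde\epsilon$. Since $\leadsto$ is not symmetric, this does not exclude $0\leadsto\epsilon_q$, so it does not even give $[\epsilon_q]\neq 0$. You also need the converse: a zig-zag in $\tilde\epsilon$ forces one in $\epsilon$ of the same length. This holds, for three reasons.
\begin{itemize}
\item $f_1(y)=\tilde y$, because $f_1$ commutes with the maps onto $\mathbf{S}_i$. Hence $f_1(F_{\alpha_i}(y))=\tilde F_{\alpha_i}(\tilde y)\neq 0$.
\item Injectivity of $\tilde u$ off its kernel gives $f_2(u^tF_{\alpha_i}(y))=\tilde u^t\tilde F_{\alpha_i}(\tilde y)$.
\item Let $e\in\mathbf{E}$ be the element reached after as many steps as the zig-zag of $\tilde\epsilon$ takes. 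Then $f_2(e)=\tilde z=f_2(z)\neq 0$, so $e=z$ by $\mathbb{F}_1$-linearity of $f_2$.
\end{itemize}
The forward degenerate case you worry about is in fact immediate. No earlier element of the path can go to $0$ or to $\tilde z$, because the path continues to $\tilde z\neq 0$ and $\tilde z$ is killed by every arrow.

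\textbf{Normal form.} Discarding all summands except $\mathbf{E}'\ni z$ and $\mathbf{F}'\ni y$ gives an extension only when $u(\mathbf{E}')\subseteq\mathbf{F}'$. That is exactly the case in which the zig-zag exists. Otherwise two things go wrong.
\begin{itemize}
\item The induced map $\mathbf{E}'\to\mathbf{F}'$ is zero, and the truncation is exact only if $\mathbf{E}'=\mathbf{S}_j$ and $\mathbf{F}'=\mathbf{S}_i$.
\item When $F_{\alpha_i}(y)\neq 0$ and $z$ has an incoming arrow, there is no $\leadsto$ in either direction between $\epsilon$ and the split extension. The map $\mathbf{F}\twoheadrightarrow\mathbf{S}_i$ has no section, and $\mathbf{S}_j\rightarrowtail\mathbf{E}$ has no retraction.
\end{itemize}
The missing idea is an intermediate extension $\epsilon'':\ \mathbf{S}_j\rightarrowtail\mathbf{E}'\to u(\mathbf{E}')\oplus\mathbf{S}_i\twoheadrightarrow\mathbf{S}_i$. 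In the unlinked case, $u(\mathbf{E}')$ is a direct summand of $\mathbf{F}$ disjoint from $\mathbf{F}'$. Take the projection $\mathbf{E}\to\mathbf{E}'$ together with the map $\mathbf{F}\to u(\mathbf{E}')\oplus\mathbf{S}_i$ that collapses $\mathbf{F}'$ onto its top, fixes $u(\mathbf{E}')$ and kills the remaining summands. This gives $\epsilon\leadsto\epsilon''$. Also, the split extension $\leadsto\epsilon''$, since $u(\mathbf{E}')\oplus\mathbf{S}_i\twoheadrightarrow\mathbf{S}_i$ has a section. Hence $[\epsilon]=0$.

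\textbf{A smaller slip.} The standard extension on $\mathbf{F}=\mathbf{M}(i,q)$ has $\mathbf{E}=\mathbf{M}(i+1,q)$, so $j\equiv i+q$. The nonzero classes therefore correspond to $q\geq 2$ with $q\equiv j-i\pmod n$, not $j-i+1$. The case $q=1$ is the split class. With these repairs, your bijection with $\mathbb{N}$ goes through.
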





   
    
\begin{lemma}\label{lem:identify-subquiver-ext}
    For any $i,j\in Q_0'$, we have $\Ext^2_{Q'}(\mathbf{S}_i,\mathbf{S}_j)_{\rm nil}=\Ext^2_{Q'}(\mathbf{S}_i,\mathbf{S}_j)$.
\end{lemma}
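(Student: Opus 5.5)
The key observation is that any $2$-extension of $\mathbf{S}_i$ by $\mathbf{S}_j$ in $\opname{rep}(Q',\mathbb{F}_1)$ should, up to equivalence, already live in the nilpotent subcategory, and that equivalences in the whole category can be pushed back into the nilpotent part. I would therefore compare the natural map
\[
\Ext^2_{Q'}(\mathbf{S}_i,\mathbf{S}_j)_{\rm nil}\to \Ext^2_{Q'}(\mathbf{S}_i,\mathbf{S}_j),
\]
induced by the inclusion of the extension-closed subcategory, and prove it is bijective.

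\textbf{Surjectivity.} Take an exact sequence $\epsilon:\mathbf{S}_j\rightarrowtail \mathbf{E}_2\xrightarrow{f}\mathbf{E}_1\twoheadrightarrow \mathbf{S}_i$. By Krull--Schmidt, decompose $\mathbf{E}_1=\mathbf{E}_1^{\rm nil}\oplus \mathbf{E}_1^{\rm c}$ and $\mathbf{E}_2=\mathbf{E}_2^{\rm nil}\oplus\mathbf{E}_2^{\rm c}$ into nilpotent and non-nilpotent indecomposable parts. By Lemma \ref{lem:cyclic-non-nilpotent-indecom}, every non-nilpotent indecomposable is simple (a cyclic coefficient quiver). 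By Corollary \ref{cor:vanish-hom}, there are no nonzero morphisms between nilpotent and non-nilpotent indecomposables. Hence $f$ is diagonal, and so are the maps from $\mathbf{S}_j$ and to $\mathbf{S}_i$; these last two factor through the nilpotent summands, since $\mathbf{S}_i,\mathbf{S}_j$ are nilpotent. Exactness then forces the non-nilpotent part to form an exact sequence $0\to\mathbf{E}_2^{\rm c}\to\mathbf{E}_1^{\rm c}\to 0$, so $f$ restricts to an isomorphism $\mathbf{E}_2^{\rm c}\cong\mathbf{E}_1^{\rm c}$. The nilpotent part gives $\epsilon^{\rm nil}\in\mathbb{E}^2(\mathbf{S}_i,\mathbf{S}_j)$ inside the nilpotent category. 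Projecting onto the nilpotent summands yields a morphism of extensions $\epsilon\leadsto\epsilon^{\rm nil}$, with the obvious commutativity. Thus $[\epsilon]=[\epsilon^{\rm nil}]$.

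\textbf{Injectivity.} Suppose two nilpotent extensions are connected by a zigzag $\epsilon_0\leadsto\epsilon_1\leftleadsto\cdots$ in $\opname{rep}(Q',\mathbb{F}_1)$. Applying the operation $\epsilon\mapsto\epsilon^{\rm nil}$ to each term, I claim that a morphism $\epsilon\leadsto\epsilon'$ induces a morphism $\epsilon^{\rm nil}\leadsto\epsilon'^{\rm nil}$. The components of the morphism are again diagonal by Corollary \ref{cor:vanish-hom}, so composing with the inclusion and projection of nilpotent summands gives commuting squares. Since $\epsilon^{\rm nil}=\epsilon$ for nilpotent $\epsilon$, the zigzag becomes one inside the nilpotent category, so the two classes agree in $\Ext^2_{Q'}(\mathbf{S}_i,\mathbf{S}_j)_{\rm nil}$.

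The main obstacle is making the "diagonal" argument rigorous in the non-additive setting. One must check that a morphism from a direct sum to a direct sum, where cross terms vanish, really splits as the disjoint union of its components on underlying pointed sets. This works because direct sums are wedge sums and the images of summands are subrepresentations. One must also verify that restricting to the nilpotent summands preserves exactness, in particular that $\ker$ and $\opname{im}$ are computed summandwise.
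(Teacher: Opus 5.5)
Your proposal is correct and takes essentially the same route as the paper. Krull--Schmidt together with Corollary~\ref{cor:vanish-hom} makes every map diagonal with respect to the nilpotent/non-nilpotent splitting; the non-nilpotent parts of the two middle terms are then identified isomorphically, so $\epsilon$ is equivalent to its nilpotent part; and injectivity comes from applying the same splitting to every morphism in a zigzag. Your explicit check that a morphism $\epsilon\leadsto\epsilon'$ induces $\epsilon^{\rm nil}\leadsto(\epsilon')^{\rm nil}$ spells out the injectivity step that the paper only sketches.
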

\begin{proof}
    Let $\epsilon: \xymatrix{\mathbf{S}_j\ar@{>->}[r]&\mathbf{M}\ar[r]&\mathbf{N}\ar@{->>}[r]&\mathbf{S}_i}$ be any exact sequence in $\opname{rep}(Q',\mathbb{F}_1)$.

    By Krull-Schmidt theorem and Corollary \ref{cor:vanish-hom}, we can rewrite $\epsilon$ as 
    \[\xymatrix{\mathbf{S_j}\,\ar@{>->}[r]^{\tiny\begin{bmatrix}
        l_j\\ 0\end{bmatrix}}&\overline{\mathbf{M}}\oplus \mathbf{L}\ar[r]^{\tiny\begin{bmatrix}\overline{f}&\\ &\id_\mathbf{L}\end{bmatrix}}&\overline{\mathbf{N}}\oplus \mathbf{L}\ar@{->>}[r]^{\tiny\begin{bmatrix}
            0&\pi_i
        \end{bmatrix}}&\mathbf{S}_j,}
    \]
    where $\overline{\mathbf{M}}$ and $\overline{\mathbf{N}}$ belong to $\opname{rep}(Q',\mathbb{F}_1)_{\rm nil}$, while $L$ is either zero or a direct sum of indecomposable non-nilpotent representation of $Q'$. It follows that $\epsilon$ is equivalent to 
   \[\xymatrix{\epsilon':\mathbf{S_j}\,\ar@{>->}[r]^{l_j}&\overline{\mathbf{M}}\ar[r]^{\overline{f}}&\overline{\mathbf{N}}\ar@{->>}[r]^{\pi_i}&\mathbf{S}_j}
    \]
    in $\Ext^2_{Q'}(\mathbf{S}_i,\mathbf{S}_j)_{\rm nil}$. On the other hand, for any $[\epsilon_1],[\epsilon_2]\in \Ext^2_{Q'}(\mathbf{S}_i,\mathbf{S}_j)_{\rm nil}$, if $[\epsilon_1]=[\epsilon_2]\in \Ext^2_{Q'}(\mathbf{S}_i,\mathbf{S}_j)$, we conclude that $[\epsilon_1]=[\epsilon_2]\in \Ext^2_{Q'}(\mathbf{S}_i,\mathbf{S}_j)_{\rm nil}$  by the above discussion and Corollary \ref{cor:vanish-hom}. Hence, $\Ext^2_{Q'}(\mathbf{S}_i,\mathbf{S}_j)_{\rm nil}=\Ext^2_{Q'}(\mathbf{S}_i,\mathbf{S}_j)$.
\end{proof}
\begin{proposition}\label{prop:cylce-cases}
   Let $Q$ be a quiver with an oriented cycle, then $\opname{gldim} \opname{rep}(Q,\mathbb{F}_1)=2$ and  $\opname{gldim} \opname{rep}(Q,\mathbb{F}_1)_{\rm nil}=2$.
\end{proposition}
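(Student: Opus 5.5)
By Theorem \ref{thm:main-thm-1}, both global dimensions are at most $2$. It therefore suffices to exhibit a nonzero element of $\Ext^2_Q(-,-)$ and of $\Ext^2_Q(-,-)_{\rm nil}$; the equalities then follow from the corollary to Lemma \ref{lem:equiv=0}.

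The plan is to transport the nonvanishing from the cycle $Q'$ to $Q$ via the embedding functor $\iota$. Fix $i,j\in Q_0'$. By Lemma \ref{lem:2-ext-infinite}, $\Ext^2_{Q'}(\mathbf{S}_i,\mathbf{S}_j)_{\rm nil}\cong\mathbb{N}$ is nonzero. By Lemma \ref{lem:identify-subquiver-ext}, this set coincides with $\Ext^2_{Q'}(\mathbf{S}_i,\mathbf{S}_j)$, so the latter is nonzero too.

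Apply Lemma \ref{lem:embedding-ext}. The map $\iota^2:\Ext^2_{Q'}(\mathbf{S}_i,\mathbf{S}_j)\to\Ext^2_Q(\iota\mathbf{S}_i,\iota\mathbf{S}_j)$ is injective and $\mathbb{F}_1$-linear, so it sends $0$ to $0$. Hence any nonzero class maps to a nonzero class, and $\opname{gldim}\opname{rep}(Q,\mathbb{F}_1)=2$.

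For the nilpotent statement I would rerun the argument of Lemma \ref{lem:embedding-ext} inside the nilpotent subcategories. The functor $\iota$ sends nilpotent representations of $Q'$ to nilpotent representations of $Q$, since every path in $Q$ acting on $\iota(\mathbf{V})$ either leaves $Q'$, where it acts by zero, or acts as a path of $Q'$. The functor $\mathrm{Res}$ likewise preserves nilpotency, since a path in $Q'$ is a path in $Q$. Both functors are exact, and $\mathrm{Res}\circ\iota=\id$. So any zigzag of $\leadsto$-relations in $\opname{rep}(Q,\mathbb{F}_1)_{\rm nil}$ connecting $\iota^2(\epsilon)$ to the trivial extension restricts to a zigzag in $\opname{rep}(Q',\mathbb{F}_1)_{\rm nil}$ connecting $\epsilon$ to the trivial extension. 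Therefore $\iota^2:\Ext^2_{Q'}(\mathbf{S}_i,\mathbf{S}_j)_{\rm nil}\to\Ext^2_{Q}(\iota\mathbf{S}_i,\iota\mathbf{S}_j)_{\rm nil}$ is injective, and its target is nonzero.

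The only step needing care is checking that restriction preserves nilpotency and exactness, so that the restricted zigzag stays within the nilpotent category. Both are immediate from the definitions.
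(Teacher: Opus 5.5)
Your proposal is correct and follows the paper's proof essentially step for step. The upper bound comes from Theorem~\ref{thm:main-thm-1}, nonvanishing on the cycle from Lemmas~\ref{lem:2-ext-infinite} and \ref{lem:identify-subquiver-ext}, transport to $Q$ from the injectivity of $\iota^2$ in Lemma~\ref{lem:embedding-ext}, and the nilpotent case from rerunning that lemma inside the nilpotent subcategories, where your explicit check that $\iota$ and $\mathrm{Res}$ preserve nilpotency spells out what the paper calls straightforward.
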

\begin{proof}
    Let $Q'$ be an oriented cycle of $Q$. By Lemma \ref{lem:identify-subquiver-ext}, Lemma \ref{lem:2-ext-infinite} and Lemma \ref{lem:embedding-ext}, we conclude that $\Ext^2_Q(-,-)\neq 0$. Therefore $\text{gldim} \opname{rep}(Q,\mathbb{F}_1)=2$ by Theorem \ref{thm:main-thm-1}. 

For the statement $\opname{gldim} \opname{rep}(Q,\mathbb{F}_1)_{\rm nil}=2$, it is straightforward to see that the embedding functor $\iota$ and the restriction $\text{Res}$ preserve nilpotent representations. In particular, Lemma \ref{lem:embedding-ext} holds when restricting to the full subcategory of nilpotent representations. Hence $\opname{gldim} \opname{rep}(Q,\mathbb{F}_1)_{\rm nil}=2$.
    
\end{proof}

\subsection{Bipartite quivers}\label{ss:bipartie-case}
Recall that a quiver is called \emph{bipartite} if every vertex is either a source or a sink. Equivalently, a quiver is bipartite if and only if it contains no oriented cycles and no subquiver of type $\mathbb{A}_3$ with a linear orientation.

The aim of this subsection is to prove the following.
\begin{theorem}\label{thm:bipartite-case}
    Let $Q$ be a bipartite quiver. Then $\opname{gldim} \opname{rep}(Q,\mathbb{F}_1)\leq 1$.
\end{theorem}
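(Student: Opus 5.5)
By Lemma \ref{lem:equiv=0}, it suffices to show that $\Ext^2(\mathbf{N},\mathbf{L})=0$ for all $\mathbf{L},\mathbf{N}\in\opname{rep}(Q,\mathbb{F}_1)$. So let
\[
\epsilon:\ \mathbf{L}\rightarrowtail \mathbf{U}\xrightarrow{f}\mathbf{M}\overset{g}{\twoheadrightarrow}\mathbf{N}
\]
be a $2$-extension. I would imitate the proof of Theorem \ref{thm:main-thm-1}, but one step further to the left. The aim is a $2$-extension $\epsilon''$ whose middle map factors through $\mathbf{0}$, connected to $\epsilon$ by a zigzag of $\leadsto$. The natural candidate for the middle object is again a ``direct sum with twisted structure maps''. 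Concretely, I would build a representation $\mathbf{W}$ with $W_i=L_i\oplus N_i$ and structure maps defined by the same three-case recipe as in Lemma \ref{lem:pull-back-rep}. The difference is that now the left map $\mathbf{L}\to\mathbf{U}$ is a monomorphism rather than an arbitrary morphism, and $\mathbf{L}$ embeds into $\mathbf{M}$ only through $f$ restricted to $\mathbf{L}$ modulo $\ker f$.

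The bipartite hypothesis enters here. Every arrow $\alpha:i\to j$ has $i$ a source and $j$ a sink. Hence no vertex is both the target and the source of arrows, so no compatibility between composed structure maps ever has to be checked. The coefficient quiver of any representation then has paths of length at most one. Exploiting this, I would show that $\mathbf{M}$ (the image part) can be ``split'' vertexwise. For each nonzero $m\in M_i$ at a source $i$, I would decide whether $m$ lies in $\opname{im} f$, and whether $M_\alpha(m)$ lies in $\opname{im} f=\ker g$.

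Using this, the plan is to construct a $1$-extension $\mathbf{L}\rightarrowtail\mathbf{E}\twoheadrightarrow\mathbf{N}$ together with a morphism of $2$-extensions. Specifically, I would take
\[
\epsilon\ \leftleadsto\ \bigl(\mathbf{L}\rightarrowtail\mathbf{U}'\to\mathbf{M}'\twoheadrightarrow\mathbf{N}\bigr)\ \leadsto\ \bigl(\mathbf{L}\rightarrowtail\mathbf{E}\xrightarrow{0}\mathbf{E}\twoheadrightarrow\mathbf{N}\bigr)\ \text{-type zero}.
\]
More precisely, I would aim to show that $\epsilon$ is equivalent to the Yoneda splice of two $1$-extensions, one of which is split. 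Because sources and sinks are separated, any $1$-extension $\mathbf{K}\rightarrowtail\mathbf{X}\twoheadrightarrow\mathbf{N}$ with $\mathbf{K}=\opname{im} f$ can be modified arrow by arrow. At a source $i$, an element $y\in N_i$ mapping into $K_j$ (with $j$ a sink) can be lifted to $L_j\oplus N_j$ via $\mathbf{U}\twoheadrightarrow \mathbf{K}$, using $\mathbb{F}_1$-linearity and surjectivity onto $\mathbf{K}$. I would choose preimages in $U_j$. Since no arrows leave $j$, no further compatibility is required, which is exactly what fails for $\cdot\to\cdot\to\cdot$. This gives a short exact sequence $\mathbf{L}\rightarrowtail\mathbf{W}\twoheadrightarrow\mathbf{N}$ mapping to $\epsilon$ as in the proof of Theorem \ref{thm:main-thm-1}. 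That yields $[\epsilon]=[\mathbf{L}\rightarrowtail\mathbf{L}\xrightarrow{0}\mathbf{W}\twoheadrightarrow\mathbf{N}]$, and the latter maps to $0$ via $\mathbf{W}\to\mathbf{N}$, as in that proof.

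The main obstacle is the lifting step. Preimages under $\mathbf{U}\to\mathbf{K}$ are not unique, since $\mathbf{L}=\ker$, and I must choose them so that the resulting $W_\alpha$ is $\mathbb{F}_1$-linear, i.e.\ injective off zero. I would first try choosing the preimage in $U_j\setminus L_j$ when possible, and otherwise sending the element to $0$. I would then verify injectivity case by case, as in items (1)--(3) of Lemma \ref{lem:pull-back-rep}, using that distinct elements of $K_j$ have distinct preimages.
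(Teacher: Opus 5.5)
\textbf{There is a genuine gap.} It occurs where you transfer the proof of Theorem~\ref{thm:main-thm-1} one degree down. Your target $\mathbf{L}\rightarrowtail\mathbf{L}\xrightarrow{0}\mathbf{W}\twoheadrightarrow\mathbf{N}$, with $\mathbf{L}\rightarrowtail\mathbf{W}\twoheadrightarrow\mathbf{N}$ short exact, is not a $2$-extension. Exactness at $\mathbf{W}$ requires $\ker(\mathbf{W}\to\mathbf{N})=\opname{im}(0)=\mathbf{0}$, but that kernel is $\mathbf{L}$.

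In Theorem~\ref{thm:main-thm-1}, the short exact sequence produced by Lemma~\ref{lem:pull-back-rep} starts at the \emph{interior} term $\mathbf{V}$ and is preceded by $\mathbf{L}=\mathbf{L}\xrightarrow{0}\mathbf{V}$. A $2$-extension has no spare slot for this, and a $1$-extension of $\mathbf{N}$ by $\mathbf{L}$ cannot map to $\epsilon$ at all. Indeed, a verbatim shift of that argument, which uses nothing about $Q$, would give $\Ext^2=0$ for every quiver, contradicting Proposition~\ref{prop:subquiver-a3-case}.

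Your $\mathbf{W}$ is also not well defined, for two reasons:
\begin{itemize}
\item The recipe of Lemma~\ref{lem:pull-back-rep} needs a map into $\mathbf{M}$ with image $\ker g$. But $\mathbf{L}=\ker f$ maps to zero in $\mathbf{M}$, so $\mathbf{L}$ does not embed into $\mathbf{M}$.
\item The preimages you choose lie in $U_j\setminus L_j$, which is not contained in $W_j=L_j\oplus N_j$.
\end{itemize}
Finally, the obstacle you single out is illusory: $f$ is $\mathbb{F}_1$-linear, so every nonzero element of $\opname{im} f$ has exactly one preimage.

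\textbf{Where the real obstruction lies, and the missing idea.} Your instinct to splice with a split $1$-extension is right, but you place the obstruction at the sinks, whereas it lives at the sources. Write $\kappa:\mathbf{L}\rightarrowtail\mathbf{U}$, $\mathbf{K}=\opname{im} f$ and $\bar f:\mathbf{U}\twoheadrightarrow\mathbf{K}$. The only candidate section is $\bar f^t$. It fails to be a morphism exactly along an arrow $\alpha:i\to j$ from a source $i$, at some $0\neq x\in K_i$ with $K_\alpha(x)=0$ but $U_\alpha(\bar f_i^t(x))\in L_j\setminus\{0\}$. For example, take $\mathbf{S}_2\rightarrowtail\mathbf{P}\twoheadrightarrow\mathbf{S}_1$ for the quiver $1\to 2$, with $\mathbf{P}$ the indecomposable of dimension vector $(1,1)$.

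The missing idea is to shrink $\mathbf{K}$ and $\mathbf{M}$ simultaneously:
\begin{itemize}
\item Let $\mathbf{K}'\subseteq\mathbf{K}$ vanish at every source having an arrow $i\to j$ with $L_j\neq0$, and agree with $\mathbf{K}$ elsewhere.
\item Let $\mathbf{W}'\subseteq\mathbf{M}$ be $g_i^t(N_i)$ at those sources and $M_i$ elsewhere.
\end{itemize}
Bipartiteness makes these subrepresentations, with $\mathbf{K}'\rightarrowtail\mathbf{W}'\twoheadrightarrow\mathbf{N}$ exact. The map $u=\bar f^t|_{\mathbf{K}'}$ is a morphism, because at the surviving sources every out-neighbour $j$ has $L_j=0$, so $\bar f_j$ is bijective. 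The map $[\kappa,\,u]:\mathbf{L}\oplus\mathbf{K}'\to\mathbf{U}$ is $\mathbb{F}_1$-linear, since $\opname{im}\kappa_i=\ker f_i$. Then $\mathbf{L}\rightarrowtail\mathbf{L}\oplus\mathbf{K}'\xrightarrow{[0,\,\mathrm{incl}]}\mathbf{W}'\twoheadrightarrow\mathbf{N}$ maps both to $\epsilon$ and to the trivial $2$-extension. This is the paper's argument.
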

\begin{proof}
    Let \[\xymatrix{\epsilon: \mathbf{K}\,\ar@{>->}[r]^{\kappa}&\mathbf{L}\ar@{->>}[dr]^{f}\ar[rr]&&\mathbf{M}\ar@{->>}[r]^g&\mathbf{N}\\ && \mathbf{V}\ar@{>->}[ur]^\beta}\] be an exact sequence in $\mathbb{E}^2(\mathbf{N},\mathbf{K})$, where the middle morphism factors as $\beta\circ f$. It suffices to show that $[\epsilon]=[0]\in \Ext^2(\mathbf{N},\mathbf{K})$.

We first introduce two representations associated to $\epsilon$.
Define a subrepresentation $\mathbf{U}=(U_i,U_\alpha)$ of $\mathbf{V}=(V_i,V_\alpha)$ as follows:
\[
U_i=\begin{cases}
    0&\text{if $\exists\ i\to j$ such that $K_j\neq 0$};\\
    V_i&\text{else}.
\end{cases}\quad \text{and}\quad U_\alpha=V_\alpha|_{U_s(\alpha)}.
\]
Equivalently, $U_i=V_i$ if $i$ is a sink or $i$ is a source but for any arrow $\alpha:i\to j$ starting from $i$, we have $K_j=0$. For any $\alpha:i\to j$, since $Q$ is bipartite, $j$ is a sink, we have $U_j=V_j$; while $U_i$ is either zero or $V_i$, i.e., a subspace of $V_i$. Thus $U_\alpha$ is well-defined. This shows that $\mathbf{U}$ is a representation of $Q$. It is straightforward to see that $\mathbf{U}$ is a subrepresentation of $\mathbf{V}$.

Define a subrepresentation $\mathbf{W}=(W_i,W_\alpha)$ of $\mathbf{M}=(M_i,M_\alpha)$ as follows:
\[
W_i=\begin{cases}
    g_i^t(N_i)&\text{if $\exists\ i\to j$ such that $K_j\neq 0$};\\
    M_i&\text{else}.
\end{cases} \quad \text{and}\quad W_\alpha:=M_\alpha|_{W_{s(\alpha)}}.
\]
Again, for each $i\in Q_0$, $W_i$ is a subspace of $M_i$, hence $W_\alpha$ is well-defined and $\mathbf{W}$ is a representation of $Q$. It is also obvious to see that $\mathbf{W}$ is a subrepresentation of $\mathbf{M}$.

Let us  denote the embeddings by $\delta:\mathbf{U}\rightarrowtail \mathbf{V}$ and $\gamma:\mathbf{W}\rightarrowtail \mathbf{M}$. We claim that $\im\beta\circ\delta\subseteq \mathbf{W}$. Since $\beta\circ\delta$ is a homomorphism of representations, it suffices to check that $\beta_i\circ\delta_i(U_i)\subseteq W_i$ for any $i\in Q_0$. There is nothing to prove if $U_i=0$. Assume $U_i\neq 0$, by definition of $\mathbf{U}$, $i$ is either a sink  or a source but for any $\alpha:i\to j$, one has $K_j=0$. In either cases, we have $W_i=M_i$ and hence $\beta_i\circ\delta_i(U_i)\subseteq M_i$. 
As a consequence, we obtain a homomorpshim $\beta\circ \delta:\mathbf{U}\to \mathbf{W}$.

 We claim that
\begin{equation}\label{diag:exact-seq}
\xymatrix{
U \ar@{>->}[r]^{\beta\circ \delta} & \mathbf{W} \ar@{->>}[r]^{g\circ \gamma} & \mathbf{N}
}   
\end{equation}
is a short exact sequence. 

The homomorphism $\beta \circ \delta$ is injective because both $\delta$ and $\beta$ are injective. Next, recall that for each vertex $i$, we have either $W_i = M_i$ or $W_i = g_i^t(N_i)$; in either case, it follows that $(g \circ \gamma)_i(W_i) = N_i$. Hence, $g \circ \gamma$ is surjective. 
It remains to show that $\ker (g \circ \gamma) = \im (\beta \circ \delta)$. The inclusion $\im (\beta \circ \delta) \subseteq \ker (g \circ \gamma)$ follows immediately from the relation $g \circ \beta = 0$. To establish the reverse inclusion, we check the condition component-wise at each vertex $i$:
\begin{itemize}
    \item \textbf{Case 1:} Suppose $i$ is a source and there exists an arrow $i \to j$ such that $K_j \neq 0$. Then $W_i = g_i^t(N_i)$, which implies that $\ker(g \circ \gamma)_i = \{0\} \subseteq \im(\beta \circ \delta)_i$.
    \item \textbf{Case 2:} Suppose $i$ is a sink, or $i$ is a source such that $K_j = 0$ for all arrows $i \to j$. In both situations, we have $W_i = M_i$ and $U_i = V_i$. Then it follows from the exactness of the ambient sequence that $\ker(g \circ \gamma)_i \subseteq \im(\beta \circ \delta)_i$.
\end{itemize}
This completes the proof that \eqref{diag:exact-seq} is a short exact sequence.

For each vertex $i\in Q_0$, we define an $\mathbb{F}_1$-linear map $u_i:U_i\to L_i$ as follows:
\begin{itemize}
    \item if $U_i=0$, then $u_i=0$;
    \item if $U_i\neq 0$, then $U_i=V_i$. We define $u_i=f_i^t:V_i\to L_i$.
\end{itemize}
For any arrow $\alpha:i\to j$, we consider the following diagram
\begin{equation}\label{diag:morphism-comm}
\xymatrix{U_i\ar[d]^{u_i}\ar[r]^{U_\alpha}& U_j\ar[d]^{u_j}\\ L_i\ar[r]^{L_\alpha}&L_j.}
\end{equation}

If $U_i=\{0\}$, the diagram \eqref{diag:morphism-comm} is clearly commutative.  Assume $U_i\neq 0$. If follows that $U_i=V_i$ and $K_j=0$. Consequently, $f_j:L_j\to V_j$ is an isomorphism. It follows that for any $0\neq x\in V_i$, \[
V_\alpha(x)=V_\alpha(f_i(f_i^t(x))=f_j(L_\alpha(f_i^t(x))).
\]
Since $f_j$ is an isomorphism, we obtain $f_j^t\circ V_\alpha(x)=L_\alpha\circ f_i^t(x)$. Hence, \eqref{diag:morphism-comm} is commutative. In particular, $u=(u_i):\mathbf{U}\to \mathbf{L}$ is a homomorphism. Furthermore, we have $f\circ u=\delta$.

Finally, we claim that $\mathbf{K}\oplus \mathbf{U}\xrightarrow{\tiny\begin{bmatrix}
    \kappa& u
\end{bmatrix}}\mathbf{L}$ is a homomorpohism of representations. We have to show that $\begin{bmatrix}
    \kappa_i& u_i
\end{bmatrix}$ is $\mathbb{F}_1$-linear for each vertex $i\in Q_0$, and for each arrow $\alpha:i\to j$, the following diagram is commutative
\[
\xymatrix{K_i\oplus U_i\ar[d]_{\tiny \begin{bmatrix}
    K_\alpha&\\ &U_\alpha
\end{bmatrix}}\ar[r]^-{\tiny\begin{bmatrix}
    \kappa_i& u_i
\end{bmatrix}}&L_i\ar[d]^{L_\alpha}\\ K_j\oplus U_j\ar[r]^-{\tiny \begin{bmatrix}
    \kappa_j& u_j
\end{bmatrix}}&L_j.}
\]
The commutativity of the diagram follows that $\kappa$ and $u$ are homomorphism once we verified that $\begin{bmatrix}
    \kappa_i& u_i
\end{bmatrix}$ is $\mathbb{F}_1$-linear.
If $U_i=0$, then $\begin{bmatrix}
    \kappa_i& u_i
\end{bmatrix}=\kappa_i$ is $\mathbb{F}_1$-linear. Now assume that $U_i\neq 0$. It follows that $U_i=V_i$ and $u_i=f_i^t:V_i\to L_i$. Note that $\im \kappa_i=\ker f_i$. We conclude that $\im \kappa_i\cap \im u_i=\{0\}$. Therefore $\begin{bmatrix}
    \kappa_i& u_i
\end{bmatrix}$ is $\mathbb{F}_1$-linear.

Putting all of these together, we obtain the following commutative diagram of exact sequences:
\[
\xymatrix{\mathbf{K}\ar@{=}[d]\ar@{>->}[r]^1&\mathbf{K}\ar[rr]^0&&\mathbf{N}\ar@{->>}[r]^1&\mathbf{N}\ar@{=}[d]\\
\mathbf{K}\ar@{>->}[r]^{\tiny \begin{bmatrix}
    1\\0
\end{bmatrix}}&\mathbf{K}\oplus \mathbf{U}\ar[u]_{\tiny\begin{bmatrix}
    1&0
\end{bmatrix} }\ar[rr]_{\tiny \begin{bmatrix}
    0&\beta\circ\delta
\end{bmatrix}}\ar[d]_{\tiny \begin{bmatrix}
    \kappa&u
\end{bmatrix}}&&\mathbf{W}\ar[u]^{g\circ \gamma}\,\ar@{>->}[d]^{\gamma}\ar@{->>}[r]^{g\circ \gamma}&\mathbf{N}\\
\mathbf{K}\ar@{=}[u]\ar@{>->}[r]^{\kappa}&\mathbf{L}\ar[rr]^{\beta\circ f}&&\mathbf{M}\ar@{->>}[r]^g&\mathbf{N}.\ar@{=}[u]
}
\]
Therefore $[\epsilon]=[0]$. This finishes the proof.
\end{proof}
\subsection{Proof of Theorem \ref{thm:main-thm-2}}\label{ss:proof-thm-2}
The statement $(3)$ of Theorem \ref{thm:main-thm-2} follows from Proposition \ref{prop:subquiver-a3-case} and Proposition \ref{prop:cylce-cases}.

Now let $Q$ be a bipartite quiver. If $|Q_0|>1$, then $Q$ has a subquiver $Q'$ of type $\mathbb{A}_2$. It has been proved that $\Ext_{Q'}^1(-,-)\neq 0$ by \cite{FRY24}*{Remark 3.7}. We conclude that $\opname{gldim}\opname{rep}(Q,\mathbb{F}_1)=1$ by Lemma \ref{lem:embedding-ext} and Theorem \ref{thm:bipartite-case}. 

If $Q$ consists of a single vertex, it has been computed by \cite{FRY24}*{Remark 3.7} that $\Ext^1_Q(-,-)=0$. Consequently, $\opname{gldim}\opname{rep}(Q,\mathbb{F}_1)=0$. This finishes the proof of Theorem \ref{thm:main-thm-2}.

\subsection*{Acknowledgements}
This work was supported by the National Natural Science Foundation of China (Grant No. 12571040).
The author would like to acknowledge the assistance of AI language models, specifically ChatGPT 5.4 and Opus 4.6, for providing conceptual inspiration and fruitful dialogue during the preparation of this manuscript. In particular, the core idea leading to the formulation of Lemma \ref{lem:pull-back-rep} emerged during discussions with these tools. The author remains solely responsible for the correctness of the mathematical proofs and the final content.

\bibliographystyle{plain}
\bibliography{ref}
\end{document}